\providecommand{\keywords}[1]{\smallskip\noindent\textbf{\emph{Keywords:}} #1}
\providecommand{\msc}[1]{\smallskip\noindent\textbf{\emph{2020 Mathematics Subject Classification:}} #1}
\declaretheorem[numberwithin=section]{theorem}
\declaretheorem[sibling=theorem]{lemma,corollary,proposition}
\declaretheorem[sibling=theorem,style=definition]{example,definition,remark}
\crefname{equation}{}{}
\newcommand{\N}{\mathbb{N}}
\newcommand{\Z}{\mathbb{Z}}
\newcommand{\R}{\mathbb{R}}
\newcommand{\ord}[2]{\mathsf{ord}\left(#1,#2\right)}
\DeclareMathOperator{\sign}{sign}
\DeclareMathOperator{\lcm}{lcm}
\DeclareMathOperator{\graph}{graph}
\DeclareMathOperator{\Card}{\#}
\DeclareMathOperator{\vlog}{\rm log}
\newenvironment{dedication}
  {\thispagestyle{empty}
   \vspace*{.5truecm}
   \itshape             
   \raggedleft          
  }
  {\par 
   \vspace{1truecm} 
  }
\begin{document}

\title{Doubling modulo odd integers, generalizations, and unexpected occurrences}

\author{Jean-Paul Allouche}
\affil{CNRS, IMJ-PRG, Sorbonne Universit\'e, Paris, France\\
\url{jean-paul.allouche@imj-prg.fr}
}

\author{Manon Stipulanti\thanks{Manon Stipulanti is an FNRS Research Associate supported by the FNRS research grant 1.C.104.24F.}}
\affil{Department of Mathematics, University of Li\`{e}ge, Belgium\\
\url{m.stipulanti@uliege.be}
}

\setcounter{footnote}{+2}
\author{Jia-Yan Yao\thanks{Jia-Yan Yao is partially supported by the National Natural Science Foundation of China (Grant No. 12231013).}}
\affil{Department of Mathematics, Tsinghua University, Beijing 100084\\
People's Republic of China,
\url{jyyao@mail.tsinghua.edu.cn}
}

\maketitle            

\begin{dedication}
 A tribute to the memory of Jacques Roubaud 1932--2024   
\end{dedication}

\begin{abstract}
The starting point of this work is an equality between two 
quantities $A$ and $B$ found in the literature, which involve
the {\em doubling-modulo-an-odd-integer} map, i.e., 
$x\in \N \mapsto 2x \bmod{(2n+1)}$ for some positive
integer $n$. More precisely, this doubling map defines a 
permutation $\sigma_{2,n}$ and each of $A$ and $B$ counts 
the number $C_2(n)$ of cycles of $\sigma_{2,n}$, hence $A=B$.
In the first part of this note, we give a direct proof of 
this last equality. To do so, we consider and study a 
generalized $(k,n)$-perfect shuffle permutation 
$\sigma_{k,n}$, where we multiply by an integer $k\ge 2$ 
instead of $2$, and its number $C_k(n)$ of cycles.
The second part of this note lists some of the many 
occurrences and applications of the doubling map and 
its generalizations in the literature: in mathematics 
(combinatorics of words, dynamical systems, number theory,
correcting algorithms), but also in card-shuffling,
juggling, bell-ringing, poetry, and music composition.

\keywords{Modular arithmetic, multiplicative order, 
permutations, cycles, perfect shuffle, combinatorics of 
words, card-shuffling, juggling, bell-ringing, poetry, music 
composition}

\msc{11B50, 11B83, (primary); 05A05, 05A19, 11A25, 20B30, 20B99,
00A65 (secondary)}
\end{abstract}

\section{Introduction}

The starting point of this note was the observation that the permutation $\tau_m$ (for some odd integer $m\ge 3$) occurring in the 2021 paper~\cite{Guo-Han-Wu-2021} by Guo, Han, and Wu, and defined on 
$[0,m-2]$ by
\begin{equation}
\label{eq: permutation tau m}
\tau_m := 
\left(
\begin{matrix}
0 \ &1 &\cdots &\frac{m-3}{2} \ &\frac{m-1}{2} \ 
&\frac{m+1}{2} &\cdots &m-2 \\
1 \ &3 &\cdots &m-2 &0 &2 &\cdots &m-3 \\
\end{matrix}
\right),
\end{equation}
is the same (up to notation) as the permutation $\sigma_n$ (for some integer $n\ge 1$) occurring in the 1983-1984 paper~\cite{Allouche-1983-1984} by Allouche, and defined on $[1,2n]$ by
\begin{equation}
\label{eq: permutation sigma n}
\sigma_n :=
\left(
\begin{matrix}
1 \ &2 &\cdots &n \ &n+1 \ &n+2 \ &\cdots &2n \\
2 \ &4 &\cdots &2n \ &1 \ &3 &\cdots &2n-1 \\
\end{matrix}
\right).
\end{equation}
(For slightly more on these two papers, see~\cref{sec:origin} 
below.) The fact that these permutations are the same up 
to notation can be seen, e.g., from the fact that the first 
one can be defined by $i \to 2i+1 \bmod{m}$ (for each $i \in [0, m-2]$), while the second one can be 
defined by $j \to 2j \bmod{(2n+1)}$ (for each $j \in [1, 2n])$.
In particular, the number of cycles in the decomposition 
into a product of disjoint cycles of $\tau_m$ is given 
in~\cite{Guo-Han-Wu-2021} by
\begin{equation}\label{expression1}
-1 + \frac{1}{\ord{2}{m}} \sum_{j=0}^{\ord{2}{m} - 1} \gcd(2^j-1, m),
\end{equation}
while it is given in~\cite{Allouche-1983-1984} by
\begin{equation}\label{expression2}
\sum_{\substack{d|m \\ d \neq 1}} \frac{\varphi(d)}{\ord{2}{d}},
\end{equation}
where $\varphi$ is the Euler totient function and, for each odd integer $\ell$, $\ord{2}{\ell}$ is the multiplicative order of $2$ modulo $\ell$.
(See precise definitions in the next section.)

The quantities in~\cref{expression1,expression2} are equal 
since they count the same objects. While wanting to give a 
direct proof of their equality, i.e., without associating 
them with counting some objects, we came across many 
occurrences of the innocent-looking map 
\begin{align}
\label{eq: doubling modd odd}
x\in \N \mapsto  2x \bmod{(2n+1)},
\end{align}
where $n$ is a fixed positive integer, as accounted below in~\cref{sec: occurrences}.
The previous map is naturally called \emph{doubling modulo an odd integer}.
The purpose of this note is twofold.
First, we propose direct proofs of the equality between the quantities in~\cref{expression1,expression2}, as a particular case of our~\cref{pro: identity with phi and ord} with~\cref{cor: equality between our two quantities}.
Before doing so, we consider in~\cref{sec: first interpretation} a permutation, noted $\sigma_{k,n}$, that 
generalizes $\sigma_n$ from~\cref{eq: permutation sigma n}, 
and describe the number $C_k(n)$ of cycles in the 
decomposition of $\sigma_{k,n}$ into a product of disjoint 
cycles.
\cref{pro: identity with phi and ord} is then showed 
in~\cref{sec: second interpretation}, for which we give 
two proofs with different flavors.
In~\cref{sec: asymptotics}, we study the asymptotics of the related sequence $(C_k(n))_{n\ge 1}$ for each value of $k$.
Finally, in~\cref{sec: occurrences}, we review some of many manifestations and applications of the doubling-modulo-odd-integers map of~\cref{eq: doubling modd odd} and its generalizations, as well as some numerous occurrences of the quantity defined in~\cref{expression2} that we have found in the literature.

\subsection{Notation}

For all integers $k\ge 2$ and $n\ge 1$, we define the \emph{$(k,n)$-perfect shuffle permutation} to be the permutation $\sigma_{k,n} \colon x \mapsto kx \bmod{(kn+1)}$ for $x\in[1,kn]$ (we take its name after~\cite{Ellis-Fan-Shallit-2002}).
The case $k=2$ corresponds to the permutation $\sigma_n$ from~\cref{eq: permutation sigma n}, which is also called the \emph{perfect shuffle permutation of order $2n$} in~\cite{Ellis-Krahn-Fan-2000}.

\begin{example}
For $k=2$ and $n=3$, we have $\sigma_{2,3} \colon 1\mapsto 2, 2 \mapsto 4, 3 \mapsto 6, 4 \mapsto 1, 5 \mapsto 3, 6 \mapsto 5$.
\end{example}

In particular, we are interested in the decomposition
of $\sigma_{k,n}$ into a product of disjoint cycles.
For all integers $k\ge 2$ and $n\ge 1$, we let $C_k(n)$ be 
the number of cycles in the decomposition of the permutation 
$\sigma_{k,n}$ into a product of disjoint cycles.
By \emph{cycles}, we mean cycles of all lengths, even 
singletons that correspond to elements fixed by the 
permutation.

\begin{example}
\label{ex: cycle decomposition}
The decomposition of $\sigma_{2,3}$ into cycles  gives 
$\sigma_{2,3} = (1 2 4) (3 6 5)$, so $C_2(3)=2$.
The cycles in the decomposition of $\sigma_{3,5}$ are $(1 \; 3 \; 9 \; 11)$, $(2 \; 6)$, $(4 \; 12)$, $(5 \; 15 \; 13 \; 7)$, $(8)$, and $(10 \; 14)$ with respective length $4,2,2,4,1,2$, so $C_3(5)=6$.

For various values of $k$, we have computed in~\cref{tab: first terms of various sequences C_k(n)} the first few terms of the sequence $(C_k(n))_{n\ge 1}$.
The sequence corresponding to $k=2$ is~\cite[A006694]{Sloane}, while the others do not seem to be indexed in Sloane's OEIS~\cite{Sloane}.
\begin{table}[ht]
    \centering
    \[
\begin{array}{c|l}
    k & (C_k(n))_{n\ge 1} \\
    \hline
    2 & 1, 1, 2, 2, 1, 1, 4, 2, 1, 5, 2, 2, 3, 1, 6, 4, 5, 1, 4, 2, \ldots \\
    3 & 2, 1, 3, 4, 6, 1, 5, 2, 6, 1, 3, 2, 12, 1, 5, 2, 14, 5, 3, 6, \ldots \\
    4 & 2, 4, 2, 4, 8, 4, 2, 8, 2, 4, 14, 4, 2, 8, 2, 12, 8, 8, 8, 8, \ldots \\
    5 & 3, 2, 7, 4, 7, 10, 11, 2, 3, 4, 13, 2, 11, 14, 11, 4, 3, 10, 25, 4, \ldots \\
    6 & 3, 1, 2, 8, 5, 9, 14, 6, 9, 1, 2, 2, 1, 9, 10, 8, 1, 1, 14, 2, \ldots \\
    7 & 4, 5, 3, 4, 14, 7, 13, 20, 16, 1, 11, 6, 6, 9, 5, 8, 38, 1, 3, 8, \ldots \\
\end{array}
\]
    \caption{For $k\in [2,7]$, the first few terms of the sequence $(C_k(n))_{n\ge 1}$, where $C_k(n)$ gives the number of cycles in the decomposition of the permutation $\sigma_{k,n}$ into a product of disjoint cycles.}
    \label{tab: first terms of various sequences C_k(n)}
\end{table}
\end{example}

To obtain several descriptions of the quantity $C_k(n)$, we introduce some notation.
For two integers $a,b\ge 1$, we let $\gcd(a,b)$ denote their greatest common divisor.
We also set $\gcd(0,b) = b$ if $b\neq 0$.
For a finite set $A$, we let $\Card A$ denote the number of 
elements of $A$, i.e., 
\begin{align}
    \label{eq: set cardinality}
\Card A = \sum_{k \in A} 1.
\end{align}
We let $\varphi$ denote the Euler totient function defined, for an integer $n\ge 1$, by the number of integers in the interval $[1, n]$ that are coprime with $n$, i.e., $\varphi(n) = \Card \{ k \in [1, n] \mid \gcd(k,n)=1  \}$.
See~\cref{fig: Euler totient} for the first few values of the
Euler totient function, which is also sequence A000010 
in Sloane's OEIS~\cite{Sloane}.
\begin{figure}
    \centering
    \includegraphics[width=0.5\linewidth]{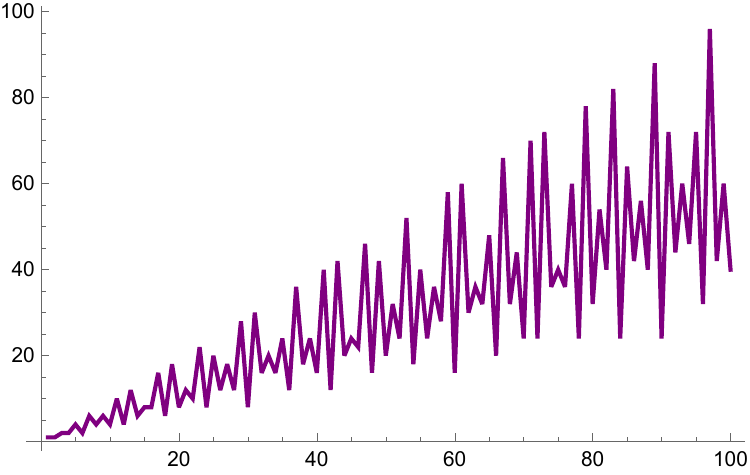}
    \caption{The first few values of Euler's totient function $\varphi$.}
    \label{fig: Euler totient}
\end{figure}
We recall (and reprove) the next well-known formula (e.g., see~\cite[Relation~(33), page~32]{Tenenbaum}) that will be 
useful later on.

\begin{lemma}
\label{lem: famous formula with varphi}
For each integer $n\ge 1$, we have $\sum_{d | n} \varphi(d) = n$.
\end{lemma}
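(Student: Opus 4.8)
The plan is to prove the classical identity $\sum_{d\mid n}\varphi(d)=n$ by a counting argument that partitions the set $[1,n]$ according to the greatest common divisor of each element with $n$. The key observation is that every integer $m\in[1,n]$ has a well-defined value $\gcd(m,n)=d$, and this $d$ is necessarily a divisor of $n$. Hence the sets $S_d:=\{m\in[1,n]\mid \gcd(m,n)=d\}$, indexed by divisors $d$ of $n$, form a partition of $[1,n]$. Taking cardinalities and using the definition of $\Card$ from~\cref{eq: set cardinality}, I would write $n=\Card[1,n]=\sum_{d\mid n}\Card S_d$, so the whole proof reduces to showing $\Card S_d=\varphi(n/d)$ for each divisor $d$.

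The main step is therefore this cardinality computation. First I would note that for $m\in[1,n]$ one has $\gcd(m,n)=d$ if and only if $d\mid m$ and $\gcd(m/d,\,n/d)=1$. Writing $m=dq$, the condition $m\in[1,n]$ translates into $q\in[1,n/d]$, and the coprimality condition becomes $\gcd(q,n/d)=1$. Thus the map $m\mapsto m/d$ is a bijection between $S_d$ and $\{q\in[1,n/d]\mid \gcd(q,n/d)=1\}$, whose cardinality is exactly $\varphi(n/d)$ by the definition of the Euler totient function recalled just above the lemma. This gives $\Card S_d=\varphi(n/d)$.

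Combining the two steps yields $n=\sum_{d\mid n}\varphi(n/d)$. To finish, I would observe that as $d$ ranges over the positive divisors of $n$, so does $n/d$, so reindexing the sum by $d':=n/d$ turns it into $\sum_{d'\mid n}\varphi(d')$, which is the desired equality.

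I do not expect any genuine obstacle here, since the statement is elementary and standard; the only point requiring a little care is the bijection argument, namely verifying cleanly that $\gcd(m,n)=d$ is equivalent to the conjunction ``$d\mid m$ and $\gcd(m/d,n/d)=1$'' and that the range condition transforms correctly under division by $d$. A reasonable alternative would be an induction on the number of prime factors of $n$ using multiplicativity of $\varphi$, but the partition argument is more transparent and fits the counting style set up by~\cref{eq: set cardinality}, so that is the route I would take.
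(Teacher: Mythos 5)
Your proof is correct and is essentially the same as the paper's: both partition $[1,n]$ according to the value of $\gcd(m,n)$, identify each class with the integers in $[1,n/d]$ coprime to $n/d$ (giving cardinality $\varphi(n/d)$), and finish by the reindexing $d'=n/d$. The only difference is presentational, as the paper compresses the argument into a single chain of equalities of sums.
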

\begin{proof}
Let us quickly recall how to prove this equality. We may proceed as follows
\[
n= \sum_{1 \leq k \leq n} 1 =  
\sum_{d | n} \sum_{\substack{1 \leq k \leq n \\ \gcd(k, n) = d}} 1
= \sum_{d | n} \sum_{\substack{1 \leq k' \leq n/d \\ \gcd(k', n/d) = 1}} 1
= \sum_{d | n} \varphi(n/d) = \sum_{d' | n} \varphi(d'),
\]
where the last equality is obtained via the classical change 
of summation index $d' = n/d$.  
\end{proof}

Given a positive integer $n$ and an integer $a$ coprime to $n$, we let $\ord{a}{n}$ be the \emph{multiplicative order of $a$ modulo $n$}, which is the smallest positive integer $\ell$ such that $a^\ell \equiv 1 \bmod n$.
Note that $\ord{a}{1}=1$ for each integer $a\ge 2$.
For example, the sequence $(\ord{2}{2n+1})_{n\ge 0}$ starts 
with $1, 2, 4, 3, 6, 10, 12$; it is sequence A002326 in 
Sloane's OEIS~\cite{Sloane}.
We let $(\Z/n\Z)^{\times}$ be the multiplicative group of invertible elements of $\Z/n\Z$.
Recall that an integer $k$ is a \emph{primitive root modulo $n$} if $k$ is a generator of the multiplicative group of integers modulo $n$, i.e., if for every integer $a$ coprime to $n$, there is some integer $\ell$ for which $k^\ell \equiv a \bmod{n}$.
In particular, for $k$ to be a primitive root modulo $n$, a necessary and sufficient condition is that
$\ord{k}{n}=\varphi(n)$.

\section{A first interpretation and an incursion into permutation group theory}
\label{sec: first interpretation}

One way of interpreting the quantity $C_k(n)$ is the following one.
Also see~\cite{Allouche-1983-1984,Ellis-Krahn-Fan-2000} for the particular case $k=2$.



\begin{proposition}
\label{pro: number of cycles in sigma kn} 
For all integers $k\ge 2$ and $n\ge 1$, the number $C_k(n)$ 
of cycles in the decomposition of the permutation 
$\sigma_{k,n}$ into a product of disjoint cycles is equal to
\[
\sum_{\substack{d | kn+1 \\ d \neq 1}} \frac{\varphi(d)}{\ord{k}{d}}\cdot
\]
\end{proposition}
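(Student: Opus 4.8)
The plan is to realize $\sigma_{k,n}$ as multiplication by $k$ on the nonzero residues modulo $N := kn+1$, and then to partition these residues according to their gcd with $N$. First I would note that $\gcd(k, N) = \gcd(k, kn+1) = 1$, so multiplication by $k$ is a bijection of $\Z/N\Z$ fixing $0$; since the interval $[1, kn]$ is a complete set of representatives for the nonzero classes modulo $N$, the permutation $\sigma_{k,n}$ is exactly this multiplication-by-$k$ map acting on $(\Z/N\Z) \setminus \{0\}$.

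The key structural observation is that $\gcd(x, N)$ is preserved under $x \mapsto kx$, again because $k$ is coprime to $N$. Consequently each cycle of $\sigma_{k,n}$ lies entirely inside one of the level sets $S_g := \{ x \in [1, kn] \mid \gcd(x, N) = g\}$, where $g$ runs over the divisors of $N$ with $g < N$. Writing $d := N/g$, so that $d$ runs over the divisors of $N$ with $d \neq 1$, I would reparametrize $S_g$ by $x = g y$: the condition $\gcd(x, N) = g$ becomes $\gcd(y, d) = 1$ with $1 \le y < d$, so $\Card S_g = \varphi(d)$, and under this identification the action $x \mapsto kx \bmod N$ becomes $y \mapsto k y \bmod d$ on the units $(\Z/d\Z)^{\times}$.

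The heart of the argument is then a uniformity statement: for $y$ invertible modulo $d$ one has $k^\ell y \equiv y \pmod d$ if and only if $k^\ell \equiv 1 \pmod d$ (cancel the unit $y$), so every orbit of multiplication by $k$ on $(\Z/d\Z)^{\times}$ has the same length, namely $\ord{k}{d}$ (well-defined since $\gcd(k,d) = 1$ as $d$ divides $N$). Therefore the $\varphi(d)$ elements of $S_g$ split into exactly $\varphi(d)/\ord{k}{d}$ cycles, an integer by Lagrange's theorem. Summing this count over all divisors $d \neq 1$ of $N = kn+1$ yields the claimed formula.

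I expect the only genuinely delicate point to be the bookkeeping in the reparametrization: checking that $x = gy$ with $\gcd(y,d)=1$ sets up a bijection between $S_g$ and the units modulo $d$, and that the induced action really is multiplication by $k$ modulo $d$. Once this is in place, the uniform orbit length is immediate and the summation is routine. I would finish by sanity-checking the formula against the worked cases $C_2(3)=2$ and $C_3(5)=6$ from \cref{ex: cycle decomposition}.
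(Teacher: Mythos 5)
Your proof is correct and takes essentially the same approach as the paper's: both partition $[1,kn]$ according to the invariant $\gcd(\cdot,\,kn+1)$, show each class consists of orbits of uniform length $\ord{k}{d}$, and sum $\varphi(d)/\ord{k}{d}$ over the divisors $d\neq 1$ of $kn+1$. The only difference is presentational: you make the bijection between a gcd-class and $(\Z/d\Z)^{\times}$ explicit and reindex by $d=(kn+1)/g$ at the start, whereas the paper states the orbit lengths and counts directly and performs the change of summation index at the end.
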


\begin{example}
\label{ex: k=3 and n=5}
With $k=3$ and $n=5$, recall from~\cref{ex: cycle decomposition} that the cycles of $\sigma_{k,n}$ are $(1 \; 3 \; 9 \; 11)$, $(2 \; 6)$, $(4 \; 12)$, $(5 \; 15 \; 13 \; 7)$, $(8)$, and $(10 \; 14)$.
Their number is given by
\[
C_k(n)=
\sum_{\substack{d | kn+1 \\ d \neq 1}} \frac{\varphi(d)}{\ord{k}{d}}
= \frac{1}{1} + \frac{2}{2} + \frac{4}{2} + \frac{8}{4} = 6.
\]
\end{example}

\begin{proof}[Proof of~\cref{pro: number of cycles in sigma kn}]
The cardinality of the orbit of $j\in[1,kn]$ under $\sigma_{k,n}$ is
\[
\ord{k}{\frac{kn+1}{\gcd(j, kn+1)}},
\]
which is the smallest integer $s\geq 1$ such that $k^sj\equiv j\bmod (kn+1)$.
(We note that if two elements $i,j$ are in the same orbit, then $\gcd(i, kn+1) = \gcd(j, kn+1)$, since $k$ and $kn+1$ are coprime; this follows from an adaptation of~\cite[Proposition~1]{Ellis-Krahn-Fan-2000} to the general case.)
So, letting $d = \gcd(j, kn+1)$, the cycles in the decomposition of $\sigma_{k,n}$ have length 
\begin{align}
\label{eq: possible cycle length}
    \ord{k}{\frac{kn+1}{d}}, 
\end{align}
where $d$ divides $(kn+1)$ and $d \neq kn+1$ (since $d=\gcd(j,kn+1)$ and $1\leq j\leq kn$). 
Furthermore, the number of cycles with that particular length is
\begin{align}
\label{eq: number of cycles with fixed length}
    \frac{\varphi(\frac{kn+1}{d})}{\ord{k}{\frac{kn+1}{d}}}.
\end{align}
So, the number of cycles in the decomposition of the
permutation $\sigma_{k,n}$ into a product of disjoint 
cycles is equal to
\[
C_k(n) = \sum_{\substack{d | kn+1 \\ d \neq kn+1}} 
\frac{\varphi(\frac{kn+1}{d})}{\ord{k}{\frac{kn+1}{d}}},
\]
which, after the change of the summation index 
$d' = (kn+1)/d$, yields the expected equality.
\end{proof}

Due to the proof of the previous result (in particular~\cref{eq: possible cycle length,eq: number of cycles with fixed length}), we obtain the following information on the cycles of the permutation $\sigma_{k,n}$.

\begin{corollary}
\label{cor: info sur les cycles}
Let $k\ge 2$ and $n\ge 1$ be integers.
For each divisor $d$ of $kn+1$ with $d\neq 1$, the permutation $\sigma_{k,n}$ possesses $\varphi(d)/\ord{k}{d}$ cycles of length $\ord{k}{d}$ in its decomposition into a product of disjoint cycles.
\end{corollary}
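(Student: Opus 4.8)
The plan is to make explicit the per-divisor bookkeeping that is already implicit in the proof of \cref{pro: number of cycles in sigma kn}. Concretely, I would fix a divisor $d$ of $kn+1$ with $d \neq 1$, count exactly how many elements $j \in [1, kn]$ satisfy $\gcd(j, kn+1) = (kn+1)/d$, and then feed this count into the orbit-length formula to conclude that all such elements lie in cycles of the common length $\ord{k}{d}$. This is really a repackaging of \cref{eq: possible cycle length,eq: number of cycles with fixed length}, read through the change of index $d \mapsto (kn+1)/d$ that already appears at the end of that proof.

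First I would recall from the proof of \cref{pro: number of cycles in sigma kn} that the orbit of $j$ under $\sigma_{k,n}$ has cardinality $\ord{k}{\frac{kn+1}{\gcd(j, kn+1)}}$, and that two elements lying in a common orbit share the same value of $\gcd(\cdot\,, kn+1)$ (because $\gcd(k, kn+1) = 1$ forces $\gcd(kj, kn+1) = \gcd(j, kn+1)$). Setting $g = (kn+1)/d$ and writing $j = gm$, the identity $\gcd(gm, kn+1) = g\gcd(m, d)$ shows that $\gcd(j, kn+1) = g$ precisely when $\gcd(m, d) = 1$; since $1 \le j \le kn < kn+1 = gd$ forces $1 \le m \le d-1$, there are exactly $\varphi(d)$ admissible values of $m$ (the residue $m = d$ being excluded because $\gcd(d, d) = d \neq 1$). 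Each of these $\varphi(d)$ elements then has orbit cardinality $\ord{k}{\frac{kn+1}{g}} = \ord{k}{d}$, and note that $\ord{k}{d}$ is well defined since $d \mid kn+1$ and $\gcd(k, kn+1) = 1$ give $\gcd(k, d) = 1$.

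Finally, because the gcd is constant along orbits, the set $\{\,j \in [1,kn] : \gcd(j, kn+1) = g\,\}$ is invariant under $\sigma_{k,n}$ and hence splits as a union of full cycles, each of length $\ord{k}{d}$; dividing its cardinality $\varphi(d)$ by the common length yields exactly $\varphi(d)/\ord{k}{d}$ cycles, as claimed. The quotient is automatically an integer because $\ord{k}{d}$ divides $\varphi(d)$ by Lagrange's theorem, and the sets obtained for distinct divisors $d$ are pairwise disjoint, so there is no double counting; summing over $d$ recovers the total of \cref{pro: number of cycles in sigma kn}. There is essentially no obstacle here: the only point needing a moment of care is the exact count of residues coprime to $d$ in the range $[1, d-1]$, which is precisely where the reindexing between the proposition and the corollary enters.
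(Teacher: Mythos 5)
Your proof is correct and follows essentially the same route as the paper: the paper derives this corollary directly from the proof of \cref{pro: number of cycles in sigma kn} (the orbit-length formula, the invariance of $\gcd(\cdot\,,kn+1)$ along orbits, and the reindexing $d \mapsto (kn+1)/d$), which is exactly the bookkeeping you spell out. Your version merely makes explicit the count of $\varphi(d)$ elements with a given gcd and the integrality of $\varphi(d)/\ord{k}{d}$, details the paper leaves implicit.
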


In fact, the structure of the cycles in the $(n,k)$-perfect shuffle permutation is precisely described in~\cite[Theorem~1]{Ellis-Fan-Shallit-2002} restated below.
Furthermore, the authors provide a linear-time algorithm to compute representatives of the cycles, which they call \emph{seeds} (the \emph{seed set} contains integers in $[1,kn]$ such that no two seeds are in the same cycle and every cycle contains a seed).

\begin{theorem}[{\cite[Theorem~1]{Ellis-Fan-Shallit-2002}}]
\label{thm: precise description of cycles}
Let $k\ge 2$ and $n\ge 1$ be integers.
The $r$-tuple $(a_0,a_1,\ldots,a_{r-1})$ with $r\ge 1$ is a cycle of the $(n,k)$-perfect shuffle permutation if and only if there exist a divisor $d$ of $kn+1$ with $d\neq 1$ and an element $a\in [1,d-1]$ (coprime with $d$) such that $r=\ord{k}{d}$ and $a_i = \frac{a(kn+1)}{d} k^i \bmod{(kn+1)}$ for $i\in [0,r-1]$. 
\end{theorem}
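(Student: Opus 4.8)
The plan is to set $m = kn+1$ and exploit that $\gcd(k,m) = \gcd(k, kn+1) = 1$, so that $\sigma_{k,n}$ is a genuine permutation of $[1, kn] = [1, m-1]$ and multiplication by $k$ is invertible modulo $m$. The guiding observation, already used in the proof of \cref{pro: number of cycles in sigma kn}, is that $\gcd(\sigma_{k,n}(j), m) = \gcd(kj, m) = \gcd(j, m)$, so the quantity $\gcd(j, m)$ is constant along every orbit. I would therefore partition $[1, m-1]$ according to the value $g = \gcd(j, m)$, which is a proper divisor of $m$; writing $d = m/g$, this is equivalent to indexing the blocks by divisors $d \mid m$ with $d \neq 1$. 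Each element $j$ with $\gcd(j, m) = m/d$ is uniquely of the form $j = \frac{m}{d} a$ with $a \in [1, d-1]$ and $\gcd(a, d) = 1$, so the block attached to $d$ is in bijection with $(\Z/d\Z)^{\times}$ via $a \mapsto \frac{m}{d} a$.

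The heart of the argument is the conjugacy identity
\[
\sigma_{k,n}\!\left(\tfrac{m}{d} a\right) = \tfrac{m}{d}\,(ka \bmod d),
\]
which I would prove by the elementary reduction $k \cdot \frac{m}{d} a = \frac{m}{d}(ka) \equiv \frac{m}{d}(ka \bmod d) \pmod{m}$, valid because $m = \frac{m}{d} \cdot d$. This shows that $\sigma_{k,n}$, restricted to the block indexed by $d$, is conjugate (through $a \mapsto \frac{m}{d} a$) to the map $a \mapsto ka$ on $(\Z/d\Z)^{\times}$. The latter is translation by $k$ in the group $(\Z/d\Z)^{\times}$, hence all of its orbits have common length equal to the order of $k$ in that group, namely $\ord{k}{d}$. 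Iterating the identity $i$ times gives $\sigma_{k,n}^{i}(\frac{m}{d} a) = \frac{m}{d}(k^i a \bmod d) = \frac{a(kn+1)}{d} k^i \bmod (kn+1)$, which is exactly the formula for $a_i$ in the statement, with $r = \ord{k}{d}$.

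With these facts in hand, both implications follow. For the forward direction, any cycle lives inside a single block by constancy of the gcd, so it is the image under $a \mapsto \frac{m}{d} a$ of an orbit of $a \mapsto ka$ on $(\Z/d\Z)^{\times}$; choosing $a$ to be any representative of that orbit produces the claimed data $(d, a)$ and the tuple $(a_0, \ldots, a_{r-1})$. For the backward direction, given admissible $d$ and $a$, the element $\frac{m}{d} a$ lies in $[1, kn]$ with $\gcd$ equal to $m/d$, and the conjugacy identity shows that its orbit is precisely $\{\frac{am}{d} k^i \bmod m : 0 \le i \le r-1\}$ with $r = \ord{k}{d}$.

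The step requiring the most care, and the main obstacle, is the distinctness and length bookkeeping: one must verify that the listed $a_i$ are pairwise distinct and that the orbit closes up after exactly $r = \ord{k}{d}$ steps, neither sooner nor later, which amounts to transporting the definition of multiplicative order through the bijection $a \mapsto \frac{m}{d} a$ without losing or duplicating elements. Once the conjugacy identity is established, this reduces to the standard fact that the orbit of any unit under multiplication by $k$ in $(\Z/d\Z)^{\times}$ has size exactly $\ord{k}{d}$; the remaining care is purely in translating between representatives $a \in [1, d-1]$ and the reduced residues $k^i a \bmod d$.
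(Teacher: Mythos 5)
Your proof is correct and takes essentially the same route as the paper's: both rest on the invariance of $\gcd(j,kn+1)$ along orbits, the parametrization of the elements with $\gcd(j,kn+1)=(kn+1)/d$ as $j=\frac{kn+1}{d}a$ with $a\in[1,d-1]$ and $\gcd(a,d)=1$, and the fact that the orbit length is the least $s\ge 1$ with $(k^s-1)a\equiv 0 \bmod d$, which equals $\ord{k}{d}$ since $a$ is a unit modulo $d$. Your packaging of this as a conjugacy between the restriction of $\sigma_{k,n}$ to each gcd-block and multiplication by $k$ on $(\Z/d\Z)^{\times}$ is simply a more structural phrasing of the paper's element-wise congruence computation, with no difference in mathematical content.
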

\begin{proof} 
    We repeat the proof of~\cref{pro: number of cycles in sigma kn}, but with more precision. Let   $(a_0,a_1,\ldots,a_{r-1})$ be a cycle of the $(n,k)$-perfect shuffle permutation with $r\ge 1$.  By definition, this is equivalent to saying that there exists an integer $j\in [1,kn]$ such that $a_i\equiv jk^i\ \bmod (kn+1)$ $(0\leq i<r)$, where $r$ is the smallest integer $s\geq 1$ such that $jk^s\equiv j\bmod (kn+1)$. Put $d=\frac{kn+1}{\gcd(j,kn+1)}$, and write $j=a\gcd(j,kn+1)$ for some integer $a$. Then $d\neq 1$, since $j<kn+1$. Also note here that $1\leq a<d$, $\gcd(a,d)=1$, and $r$ is the smallest integer $s\geq 1$ such that $(k^s-1)a\equiv 0\bmod d$. Hence $r=\ord{k}{d}$, for $a$ and $d$ are coprime. The above argument can be reversed, so the desired result holds. 
\end{proof}

\begin{example}
\label{ex: k is 2 and n is 7}
 Let $k=2$ and $n=7$.
 The lengths of the cycles in the decomposition of the permutation $\sigma_{k,n}$ as a product of disjoint cycles belong to $\{ \ord{k}{d} \mid \text{$d$ divides $(kn+1)$ and $d\neq 1$} \}=\{2,4\}$ due to~\cref{cor: info sur les cycles}.
 They are $(1 \; 2 \; 4 \; 8)$, $(3 \; 6 \; 12 \; 9$), $(5 \; 10)$, and $(7 \;14 \; 13 \; 11)$.
 The divisors of $kn+1=15$ are $1,3,5,15$.
 According to~\cref{thm: precise description of cycles}, we show in~\cref{tab:Shallit's construction of cycles} all possible values of $d$ and $a$ to build the cycles of $\sigma_{k,n}$.
 Observe that the same cycle is rotated to obtain another cycle.
 Seed sets are, e.g., $\{1,3,5,7\}$ and $\{2,3,10,7\}$.
\begin{table}
    \centering
    \[
    \begin{array}{|c|c|c|}
\hline
d & a & \text{cycles}  \\ 
\hline
\multirow{2}{*}{3} & 1 & (5 \; 10) \\
& 2 & (10 \; 5) \\ 
\hline
\multirow{4}{*}{5} & 1 & (3 \; 6 \; 12 \; 9)\\ 
& 2 & (6 \; 12 \; 9 \; 3)\\
& 3 & (9 \; 3 \; 6 \; 12)\\
& 4 & (12 \; 9 \; 3 \; 6)\\
\hline
\end{array}
\quad
\begin{array}{|c|c|c|}
\hline
d & a & \text{cycles}  \\ 
\hline
\multirow{8}{*}{15} & 1 & (1 \; 2 \; 4 \; 8) \\
       & 2 & (2 \; 4 \; 8 \; 1) \\
       & 4 & (4 \; 8 \; 1 \; 2)\\
       & 7 & (7 \; 14 \; 13 \; 11)\\
       & 8 & (8 \; 1 \; 2 \; 4)\\
       & 11 & (11 \; 7 \; 14 \; 13)\\
       & 13 & (13 \; 11 \; 7 \; 14)\\
       & 14 & (14 \; 13 \; 11 \; 7)\\
\hline
\end{array}
    \]
    \caption{For $k=2$ and $n=7$, we use~\cref{thm: precise description of cycles} to obtain the cycles of the permutation $\sigma_{k,n}$.}
    \label{tab:Shallit's construction of cycles}
\end{table}
\end{example}

The \emph{order} of a permutation $\sigma$ is the smallest number of times the permutation must be applied to return to the identity permutation, i.e., the smallest integer $\ell$ such that $\sigma^\ell = \text{id}$.
In fact, the order of $\sigma$ is equal to the least common multiple of the lengths of cycles in its decomposition into a product of disjoint cycles.

\begin{proposition}
For all integers $k\ge 2$ and $n\ge 1$, the order of the permutation $\sigma_{k,n}$ is $\ord{k}{kn+1}$. 
\end{proposition}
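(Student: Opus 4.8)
The plan is to use the standard characterization, recalled just before the statement, that the order of a permutation equals the least common multiple of the lengths of the cycles in its disjoint-cycle decomposition. So I would first invoke \cref{cor: info sur les cycles}, which tells us that the set of cycle lengths of $\sigma_{k,n}$ is exactly $\{\ord{k}{d} : d \mid kn+1,\ d \neq 1\}$. Hence the order of $\sigma_{k,n}$ equals $\lcm_{d \mid kn+1,\, d \neq 1} \ord{k}{d}$, and the entire task reduces to showing that this least common multiple is $\ord{k}{kn+1}$.

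The argument then splits into two divisibility inclusions. First I would observe that $\gcd(k, kn+1) = 1$ (indeed $kn \equiv -1 \bmod (kn+1)$), so $k$ is invertible modulo every divisor $d$ of $kn+1$ and the relevant orders are all defined. For each such $d$, reducing the congruence $k^{\ord{k}{kn+1}} \equiv 1 \bmod (kn+1)$ modulo $d$ gives $k^{\ord{k}{kn+1}} \equiv 1 \bmod d$, whence $\ord{k}{d}$ divides $\ord{k}{kn+1}$. Since this holds for every divisor $d \neq 1$, the least common multiple of the $\ord{k}{d}$ divides $\ord{k}{kn+1}$.

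For the reverse inclusion, I would note that $kn+1$ is itself a divisor of $kn+1$ with $kn+1 \geq 3 > 1$, so $\ord{k}{kn+1}$ is one of the terms appearing in the least common multiple; therefore $\ord{k}{kn+1}$ divides that least common multiple. Combining the two inclusions yields equality, completing the proof.

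There is essentially no hard step here: everything follows from the elementary fact that the multiplicative order modulo a divisor divides the order modulo the full modulus, together with the already-established description of the cycle lengths. The only point requiring a little care is to keep $d = kn+1$ inside the range of divisors, which is legitimate precisely because the excluded value is $d = 1$ and not $d = kn+1$; this single term is exactly what forces the least common multiple up to the full order $\ord{k}{kn+1}$ rather than to some proper divisor of it.
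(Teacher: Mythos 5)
Your proof is correct and follows essentially the same route as the paper: reduce to the least common multiple of the cycle lengths via \cref{cor: info sur les cycles}, then observe that $\ord{k}{d}$ divides $\ord{k}{kn+1}$ for every divisor $d$ of $kn+1$ (since $k^{\ord{k}{kn+1}}-1$ is a multiple of $kn+1$, hence of $d$), while $d=kn+1$ itself contributes the term $\ord{k}{kn+1}$ to the lcm. Your write-up merely makes explicit the second divisibility inclusion, which the paper leaves implicit.
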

\begin{proof}
Using~\cref{cor: info sur les cycles}, we obtain that the order of the permutation $\sigma_{k,n}$ is given by 
\begin{equation}\label{eq9}
L=\lcm\{ \ord{k}{d} \mid \text{$d$ is a divisor of $kn+1$ and $d\neq 1$}\}.
\end{equation}
To prove the statement, we show that $L= \ord{k}{kn+1}$.
For the sake of conciseness, let us write $\theta(d)=\ord{k}{d}$ for a divisor $d$ of $kn+1$.
By minimality of $\theta(d)$, we have that $\theta(d)$ divides $\theta(kn+1)$ when $d$ divides $kn+1$ (indeed, in this case, since $k^{\theta(kn+1)}- 1$ is a multiple of $kn+1$, then $k^{\theta(kn+1)}- 1$ is also a multiple of $d$), hence $L=\theta(kn+1)$ by the formula~\cref{eq9}.
\end{proof}

\begin{example}
Resuming~\cref{ex: k is 2 and n is 7}, we find that the order of the permutation $\sigma_{k,n}$ is given by $\lcm\{ 2,4 \} = 4$.
\end{example}

Given a permutation $\sigma$ on $[0,n]$, an \emph{inversion} is a pair $\{i,j\}$ of elements in $[0,n]$ such that $i<j$ implies $\sigma(i)>\sigma(j)$.
The permutation $\sigma$ is \emph{even} (resp., \emph{odd}) if its number of inversions is even (resp., odd).
The \emph{signature} $\sign(\sigma)$ of $\sigma$ is equal to $1$ (resp., $-1$) if $\sigma$ is even (resp., odd).
Looking again at~\cref{tab: first terms of various sequences C_k(n)}, we may compute the signature of the first few permutations $\sigma_{k,n}$: for $k\in\{2,3,6,7\}$, we obtain the sequence of signatures $-1,-1,1,1$ and for $k\in\{4,5\}$, the signature is always $1$.
We propose two ways to compute the signature of $\sigma_{k,n}$, one direct and the other using~\cref{cor: info sur les cycles}.

\begin{proposition}
\label{pro: number of inversions}
For all integers $k\ge 2$ and $n\ge 1$, the number of inversions of the permutation $\sigma_{k,n}$ is given by $\frac{(k-1)k}{2} \cdot \frac{n(n+1)}{2}$.
In particular, for each $k\ge 2$, the sequence $(\sign(\sigma_{k,n}))_{n\ge 1}$ of signatures of the permutation $\sigma_{k,n}$ is periodic with periods given by
\[
\begin{cases}
(-1,-1,1,1), & \text{if $k\equiv 2 \bmod{4}$}; \\
(-1,-1,1,1), & \text{if $k\equiv 3 \bmod{4}$}; \\
(1), & \text{if $k\equiv 0 \bmod{4}$}; \\
(1), & \text{if $k\equiv 1 \bmod{4}$}.
\end{cases}
\]
\end{proposition}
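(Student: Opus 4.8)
The plan is to first obtain a closed formula for $\sigma_{k,n}$ by writing each argument in a mixed-radix form adapted to the map. Every $x\in[1,kn]$ can be written uniquely as $x = jn+i$ with $j\in[0,k-1]$ and $i\in[1,n]$, and then $kx = j(kn+1)+(ki-j)$ with $ki-j\in[1,kn]$, so that
\[
\sigma_{k,n}(jn+i) = ki - j, \qquad j\in[0,k-1],\ i\in[1,n].
\]
In other words, $[1,kn]$ splits into $k$ consecutive blocks of length $n$, on each of which $\sigma_{k,n}$ is increasing, and the blocks are interleaved in the image. I would verify this explicitly on a running example (e.g. $k=3$, $n=5$) as a sanity check.

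With this description in hand, I would count inversions directly. Fix two arguments $x = jn+i$ and $x' = j'n+i'$. If $j=j'$, the restriction of $\sigma_{k,n}$ to a block is increasing, so such pairs never contribute. If $j<j'$, then automatically $x<x'$, and the inversion condition $\sigma_{k,n}(x)>\sigma_{k,n}(x')$ reads $k(i-i') > j-j'$. The key elementary estimate is that $-(k-1)\le j-j'\le -1$ while $k(i-i')$ is a nonzero multiple of $k$ whenever $i\neq i'$; hence the inequality holds precisely when $i\ge i'$ (for $i<i'$ one has $k(i-i')\le -k < -(k-1)\le j-j'$, so it fails). Consequently the number of inversions equals the number of pairs $0\le j<j'\le k-1$ times the number of pairs $1\le i'\le i\le n$, namely $\binom{k}{2}\cdot\frac{n(n+1)}{2} = \frac{(k-1)k}{2}\cdot\frac{n(n+1)}{2}$, as claimed. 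I expect this cross-block case analysis, and in particular pinning down the boundary via $j'-j\le k-1<k$, to be the only delicate point, though it is entirely elementary.

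For the periodicity of the signature, I would use $\sign(\sigma_{k,n}) = (-1)^{I}$ with $I = \binom{k}{2}\binom{n+1}{2}$, so that everything reduces to the parities of the two factors. A short case check modulo $4$ gives that $\binom{k}{2}$ is odd exactly when $k\equiv 2,3\pmod 4$, and that $\binom{n+1}{2}=\frac{n(n+1)}{2}$ is odd exactly when $n\equiv 1,2\pmod 4$; thus $I$ is odd if and only if both hold. When $k\equiv 0,1\pmod 4$, the first factor is even, so $\sign(\sigma_{k,n})=1$ for all $n$, giving period $(1)$; when $k\equiv 2,3\pmod 4$, the sign equals $(-1)^{\binom{n+1}{2}}$, which runs through $-1,-1,1,1$ as $n\equiv 1,2,3,0\pmod 4$, giving period $(-1,-1,1,1)$. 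This matches the stated table.

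Finally, as the alternative route alluded to in the text, I would note that the cycle data of \cref{cor: info sur les cycles} yields
\[
\sign(\sigma_{k,n}) = \prod_{\substack{d\mid kn+1 \\ d\neq 1}} \bigl((-1)^{\ord{k}{d}-1}\bigr)^{\varphi(d)/\ord{k}{d}} = (-1)^{kn - C_k(n)},
\]
where the exponent is simplified using \cref{lem: famous formula with varphi} (to get $\sum_{d\mid kn+1,\,d\neq 1}\varphi(d)=kn$) and \cref{pro: number of cycles in sigma kn} (to recognize $\sum_d \varphi(d)/\ord{k}{d}=C_k(n)$). This recovers the classical relation between signature and number of cycles and serves as a cross-check, but since extracting the explicit periodicity from the parity of $kn-C_k(n)$ is far less transparent than the inversion count, I would keep the direct computation as the main argument.
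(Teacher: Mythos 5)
Your proof is correct and follows essentially the same route as the paper: the paper likewise splits $[1,kn]$ into the $k$ blocks $I_\ell=[(\ell-1)n+1,\ell n]$, uses the affine formula $\sigma_{k,n}(x)=kx-(\ell-1)(kn+1)$ on each block, observes that only cross-block pairs can be inversions, and shows each of the $\binom{k}{2}$ block pairs contributes exactly $\frac{n(n+1)}{2}$ inversions via the same inequality $k(m'-m)<\ell'-\ell<k$. Your explicit parity analysis of $\binom{k}{2}\binom{n+1}{2}$ fills in what the paper leaves as a routine check, and your cycle-based cross-check is exactly the paper's subsequent corollary.
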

\begin{proof}
For each $\ell\in[1,k]$, define $I_\ell$ to be the interval of integers $[(\ell-1) n + 1, \ell n]$.
We note that we have $\sigma_{k,n}(x) = kx - (\ell-1) (kn+1)$ for $x\in I_\ell$ and $\ell\in[1,k]$.
(For example, $\sigma_{3,n}$ maps $x$ to $3x$ if $x\in[1,n]$, to $3x-(3n+1)$ if $x\in[n+1,2n]$, and to $3x-2(3n+1)$ if $x\in[2n+1,3n]$.)
The pair $\{i,j\}$ cannot be an inversion unless $i\in I_\ell$ and $j\in I_{\ell'}$ with $\ell,\ell'\in[1,k]$ and $\ell<\ell'$.
Couting these pairs of $\ell,\ell'$ gives
\[
\sum_{\ell=1}^{k-1} (k-\ell) = \sum_{\ell=1}^{k-1} \ell = \frac{(k-1)k}{2}.
\]
After analyzing the behavior of the map $\sigma_{k,n}$, the number of inversions $\{i,j\}$ with $i\in I_\ell$, $j\in I_{\ell'}$, $\ell,\ell'\in[1,k]$, and $\ell<\ell'$ is
\[
\sum_{m=1}^n m = \frac{n(n+1)}{2}.
\]
Indeed if we write $i=(\ell-1)n+m$ and $j=(\ell'-1)n+m'$ with $1\leq m,m'\leq n$, then $\sigma_{k,n}(i)>\sigma_{k,n}(j)$ if and only if $k(m'-m)<\ell'-\ell<k$, which means $1\leq m'\leq m$.
Putting together these two quantities gives the first part of the statement.

To get the second part of the statement, it is enough to carefully analyze, for all integers $k\ge 2$ and $n\ge 1$, the parity of $\frac{(k-1)k}{2} \cdot \frac{n(n+1)}{2}$ .
\end{proof}

\begin{corollary}
For all integers $k\ge 2$ and $n\ge 1$, the signature of the permutation $\sigma_{k,n}$ is determined by the parity of $kn - C_k(n)$.
\end{corollary}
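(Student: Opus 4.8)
The plan is to invoke the classical relationship between the signature of a permutation and the number of cycles in its decomposition into disjoint cycles. Recall that $\sigma_{k,n}$ acts on the set $[1,kn]$, which has exactly $kn$ elements, and that, by our convention, $C_k(n)$ counts \emph{all} cycles in the decomposition, including the singletons corresponding to fixed points. First I would recall the elementary fact that a cycle of length $\ell$ can be written as a product of $\ell-1$ transpositions, so that its signature is $(-1)^{\ell-1}$.

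Next, writing the disjoint cycle decomposition of $\sigma_{k,n}$ as cycles of lengths $\ell_1, \ell_2, \ldots, \ell_{C_k(n)}$, I would use the multiplicativity of the signature together with the fact that $\sum_{i=1}^{C_k(n)} \ell_i = kn$, since every element of $[1,kn]$ lies in exactly one cycle. This yields
\[
\sign(\sigma_{k,n}) = \prod_{i=1}^{C_k(n)} (-1)^{\ell_i - 1} = (-1)^{\sum_{i=1}^{C_k(n)} (\ell_i - 1)} = (-1)^{kn - C_k(n)}.
\]
In particular, the signature equals $+1$ or $-1$ according to whether $kn - C_k(n)$ is even or odd, which is exactly the claimed statement.

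There is essentially no serious obstacle, as this is a standard result of permutation group theory; the only point requiring care is the bookkeeping of fixed points. Because we have explicitly agreed that $C_k(n)$ includes singleton cycles, the total cycle count matches the exponent in the standard formula $(-1)^{N-c}$ with $N = kn$ and $c = C_k(n)$, so no correction term is needed. As a consistency check, one could cross-reference the resulting formula against~\cref{pro: number of inversions}: the parity of $kn - C_k(n)$ must agree with the parity of $\frac{(k-1)k}{2}\cdot\frac{n(n+1)}{2}$ computed there, which gives an independent verification of the periodic pattern of signatures recorded in that proposition.
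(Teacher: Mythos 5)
Your proof is correct, and it takes a more elementary route than the paper does. Both arguments start from the same fact that a length-$\ell$ cycle has signature $(-1)^{\ell-1}$, but they diverge in how they evaluate the exponent $\sum_i (\ell_i - 1)$. You observe that the cycles partition $[1,kn]$, so the lengths sum to $kn$ and the exponent is immediately $kn - C_k(n)$; this is the universal identity $\sign(\sigma) = (-1)^{N-c}$ valid for any permutation of $N$ elements with $c$ cycles, and it uses none of the arithmetic structure of $\sigma_{k,n}$. The paper instead stays inside its number-theoretic framework: it invokes \cref{cor: info sur les cycles} to write the exponent as
\[
\sum_{\substack{d \mid kn+1 \\ d \neq 1}} \frac{\varphi(d)}{\ord{k}{d}}\left(\ord{k}{d}-1\right)
= \sum_{\substack{d \mid kn+1 \\ d \neq 1}} \varphi(d) \; - \sum_{\substack{d \mid kn+1 \\ d \neq 1}} \frac{\varphi(d)}{\ord{k}{d}},
\]
and then evaluates the two sums as $kn$ and $C_k(n)$ via \cref{lem: famous formula with varphi} and \cref{pro: number of cycles in sigma kn} respectively. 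What your approach buys is brevity and generality: the result holds for any permutation of $[1,kn]$ with $C_k(n)$ cycles, with no dependence on the divisor-based cycle description. What the paper's approach buys is internal coherence: it re-derives the same parity purely from the explicit arithmetic data $\left(\ord{k}{d}, \varphi(d)/\ord{k}{d}\right)$ attached to the divisors of $kn+1$, which makes the link between the signature and those arithmetic quantities visible and serves as a consistency check on \cref{cor: info sur les cycles}. Your concluding remark about cross-checking against \cref{pro: number of inversions} matches the paper's own intent, since combining the two results is exactly how the paper determines the parity of $C_k(n)$.
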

\begin{proof}
Recall that a length-$\ell$ cycle is an even (resp., odd) permutation if $\ell$ is odd (resp., even).
Thus, using~\cref{cor: info sur les cycles}, we know that the signature of $\sigma_{k,n}$ is determined by the parity of
\[
\sum_{\substack{d | kn+1 \\ d \neq 1}}
\frac{\varphi(d)}{\ord{k}{d}} (\ord{k}{d}-1)
=  \sum_{\substack{d | kn+1 \\ d \neq 1}}
\varphi(d)- \sum_{\substack{d | kn+1 \\ d \neq 1}}
\frac{\varphi(d)}{\ord{k}{d}} = kn - C_k(n),
\]
where, for the last equality, we used~\cref{lem: famous formula with varphi,pro: number of cycles in sigma kn}.
\end{proof}

We note that combining the previous two results allows us to determine the parity of the number $C_k(n)$ for all integers $k\ge 2$ and $n\ge 1$.

Given an integer $n\ge 1$, let us consider the permutation $\mu_n$ over $[1,n]$ that maps $x$ to $\frac{x}{2}$ if $x$ is even, and $n-\frac{x-1}{2}$ otherwise, and referred to as the \emph{Queneau-Daniel permutation}~\cite{Bringer-1969}.
An integer $n$ is a \emph{Queneau number} if the corresponding permutation $\mu_n$ is a cycle of length $n$.
The first few Queneau numbers are $1, 2, 3, 5, 6, 9, 11, 14, 18$ (also see~\cite[A054639]{Sloane}).
They were first analyzed by Queneau, then Bringer 
proposed a first systematic mathematical study of these 
numbers and their generalizations~\cite{Bringer-1969}; then
they were fully characterized by Dumas~\cite{Dumas-2008}, 
also see~\cite{Vallet2010}, the history 
in~\cite{Saclolo-2011}, and the survey~\cite{Asveld-2013}.
In the case of the permutation $\sigma_{k,n}$ one could ask a similar question, i.e., for which integers $n$ do we have $C_k(n)=1$?
We call these integers \emph{Queneau-like numbers with respect to $k$}.
In view of~\cref{tab: first terms of various sequences C_k(n)}, we are able to compute the first few Queneau-like numbers with respect to the first few values of $k\in[2,7]$ in~\cref{tab:Queneau-like numbers}.
Queneau-like numbers with respect to $k=2$ are related to the sequence~\cite[A163782]{Sloane} (see~\cref{cor: J2 prime iff Queneau-like for 2} below); the other sequences of Queneau-like numbers do not seem to appear in the OEIS~\cite{Sloane}.
We obtain the following characterization of Queneau-like numbers.
\begin{table}
    \centering
    \[
    \begin{array}{c|c|c|c|c|c|c}
        k & 2 & 3 & 4 & 5 & 6 & 7\\
        \hline
        \text{Queneau-like numbers} & 1, 2, 5, 6, 9, 14, 18 & 2, 6, 10, 14 & \emptyset & \emptyset & 2, 10, 13, 17, 18 & 10, 18 \\
        \text{w.r.t $k$ in $[1,20]$} &&&&&& 
    \end{array}
    \]
    \caption{For $k\in [2,7]$, the first few Queneau-like numbers with respect to $k$.}
    \label{tab:Queneau-like numbers}
\end{table}

\begin{proposition}
\label{pro: Queneau-like numbers}
Let $k\ge 2$ and $n\ge 1$ be integers.
The following are equivalent.
\begin{enumerate}
    \item[{(1)}] The integer $n$ is a Queneau-like number with respect to $k$.
    \item[{(2)}] The permutation $\sigma_{k,n}$ is a cycle of length $kn$.
    \item[{(3)}] The integer $kn+1$ is a prime number and $k$ is a primitive root modulo $(kn+1)$.
    \item[{(4)}] The multiplicative order of $k$ modulo $(kn+1)$ is $kn$, i.e., $\ord{k}{kn+1}=kn$.
\end{enumerate}
\end{proposition}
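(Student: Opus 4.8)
The plan is to prove the short equivalence (1) $\Leftrightarrow$ (2) directly from the definitions and then close the loop through the chain (1) $\Leftrightarrow$ (4) $\Leftrightarrow$ (3). The single observation that drives everything is a coprimality fact already implicit in the paper: every divisor $d$ of $kn+1$ is coprime to $k$, since any common prime factor of $d$ and $k$ would divide both $kn+1$ and $kn$, hence divide their difference $1$. Consequently $\ord{k}{d}$ is well defined and divides $\varphi(d)$ for every divisor $d$ of $kn+1$, so each summand $\varphi(d)/\ord{k}{d}$ appearing in the formula of \cref{pro: number of cycles in sigma kn} is a \emph{positive integer}, namely at least $1$. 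This integrality is the lever that lets a sum equal to $1$ force a single nonzero term.

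For (1) $\Leftrightarrow$ (2), I would simply count: the permutation $\sigma_{k,n}$ acts on the $kn$ elements of $[1,kn]$, so a decomposition into a single cycle (that is, $C_k(n)=1$, the defining property of a Queneau-like number with respect to $k$) is exactly the statement that this one cycle has length $kn$. For (1) $\Leftrightarrow$ (4), I would start from $C_k(n)=\sum_{d\mid kn+1,\,d\neq 1}\varphi(d)/\ord{k}{d}$. Because every summand is a positive integer, the sum equals $1$ if and only if there is exactly one divisor $d\neq 1$ of $kn+1$ and the corresponding summand equals $1$. Having exactly one divisor other than $1$ means $kn+1$ is prime and $d=kn+1$; in that case $\varphi(kn+1)=kn$ and the summand $kn/\ord{k}{kn+1}$ equals $1$ precisely when $\ord{k}{kn+1}=kn$. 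Thus $C_k(n)=1\iff\ord{k}{kn+1}=kn$.

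For (3) $\Leftrightarrow$ (4), I would use the elementary bound $\varphi(kn+1)\le kn$, with equality exactly when $kn+1$ is prime, which follows from \cref{lem: famous formula with varphi} by isolating the term $\varphi(1)=1$ in $\sum_{d\mid kn+1}\varphi(d)=kn+1$. If $\ord{k}{kn+1}=kn$, then $kn$ divides $\varphi(kn+1)$, and combined with $\varphi(kn+1)\le kn$ this forces $\varphi(kn+1)=kn$; hence $kn+1$ is prime and $\ord{k}{kn+1}=\varphi(kn+1)$, so $k$ is a primitive root modulo $kn+1$ by the primitive-root characterization recalled in the Notation section. Conversely, if $kn+1$ is prime and $k$ is a primitive root, then $\ord{k}{kn+1}=\varphi(kn+1)=kn$.

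The argument is essentially routine once the cycle-counting formula of \cref{pro: number of cycles in sigma kn} is in hand, so there is no single hard step; the one point demanding care is the integrality of each summand $\varphi(d)/\ord{k}{d}$, since it is precisely this (resting on $\gcd(d,k)=1$ for divisors $d$ of $kn+1$) that turns the collapse of the sum to $1$ into the primality of $kn+1$, and thereby links the combinatorial condition $C_k(n)=1$ to the arithmetic condition in (3) and (4).
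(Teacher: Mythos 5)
Your proof is correct, and its skeleton is the same as the paper's: reduce everything to the cycle-count formula of \cref{pro: number of cycles in sigma kn} and force the sum $\sum_{d\mid kn+1,\, d\neq 1}\varphi(d)/\ord{k}{d}$ to collapse to a single term equal to $1$. Two details differ, and both are worth noting. First, you justify the collapse by observing that each summand is a positive \emph{integer}, since $\ord{k}{d}$ divides $\varphi(d)$ (Lagrange's theorem, available because every divisor $d$ of $kn+1$ is coprime to $k$); the paper only says each term is ``a positive number'', which taken literally would not suffice (positive reals can sum to $1$ in many ways), so your integrality remark makes explicit precisely the fact the paper's argument silently uses, namely that each term is at least $1$. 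Second, you and the paper prove differently that $\ord{k}{kn+1}=kn$ already forces $kn+1$ to be prime: the paper argues that the cyclic subgroup generated by $k$ then has $kn$ elements, hence exhausts all nonzero residues of $\Z/(kn+1)\Z$, making them all invertible; you instead combine Lagrange ($kn$ divides $\varphi(kn+1)$) with the bound $\varphi(kn+1)\le kn$, with equality exactly for primes, which you extract neatly from \cref{lem: famous formula with varphi}. Both routes are elementary and valid; the paper's is a touch more self-contained, while yours reuses the divisibility structure you already set up. One presentational nit: the assertion ``$C_k(n)=1 \iff \ord{k}{kn+1}=kn$'' at the end of your second paragraph is, at that point, proved only in the forward direction --- the converse needs ``order $kn$ implies $kn+1$ prime'', which you supply only in the following paragraph --- so you should either reorder, or state that paragraph's conclusion as $C_k(n)=1$ if and only if $kn+1$ is prime and $\ord{k}{kn+1}=kn$, and then let the last paragraph close the loop.
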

\begin{proof}
The equivalence $(1) \Leftrightarrow (2)$ follows from the definition of Queneau-like numbers.

We show $(2) \Leftrightarrow (3)$.
We first note that the permutation $\sigma_{k,n}$ is a cycle of length $kn$ if and only if $C_k(n)=1$.
Using~\cref{pro: number of cycles in sigma kn}, we obtain that this condition is equivalent to ask that
\[
\sum_{\substack{d | kn+1 \\ d \neq 1}}
\frac{\varphi(d)}{\ord{k}{d}} = 1.
\]
As each term in the sum is a positive number, $C_k(n)=1$ if and only if $kn+1$ is a prime and $\frac{\varphi(kn+1)}{\ord{k}{kn+1}}=1$.
As the multiplicative group $(\Z/(kn+1)\Z)^*$ is of cardinality $\varphi(kn+1)$, we obtain that the last part of the condition is equivalent to asking that $k$ is a primitive root modulo $kn+1$, as desired.

We finally show that $(2) \Leftrightarrow (4)$.
Due to the previous paragraph, Item~$(2)$ rewrites: $kn+1$ is a prime and $\ord{k}{kn+1}=kn$, by definition of the Euler totient function. 
Now we show that the first part of the latter condition is superfluous.
Suppose that $\ord{k}{kn+1}=kn$. We always have that
$\gcd(k, kn+1) = 1$, thus $k$ and its powers are invertible
modulo $kn+1$. If $\ord{k}{kn+1} = kn$, this means that
the multiplicative group generated by $k$ has cardinality
$kn$, thus is equal to the set of all non-zero elements of 
$\Z/(kn+1)\Z$. In particular, all non-zero elements of 
$\Z/(kn+1)\Z$ are invertible, so $kn+1$ is a prime number, 
as desired.
\end{proof}

\begin{remark}
\label{rk: Josephus problem}
An alternative proof of the equivalence between Items $(2)$ and $(4)$ of~\cref{pro: Queneau-like numbers} is to use~\cref{thm: precise description of cycles}.
Indeed, we notably want to find a divisor $d$ of $kn+1$ with $d\neq 1$ such that $kn=\ord{k}{d}$.
If $d\neq kn+1$, then $d<kn$ and the condition cannot be fulfilled; so $d=kn+1$, as expected.

We note that Item~$(3)$ reminds us of the result from~\cite[Corollaires page~8]{Allouche-1983-1984} for the case $k=2$.

While we are able to eliminate the condition that 
$kn+1$ is a prime number when assuming Item~$(4)$ 
of~\cref{pro: Queneau-like numbers}, we note that it may
happen that $k$ is a primitive root modulo $kn+1$ but 
$kn+1$ is not a prime number. For example, for $k=2$ and 
$n=4$, we have that $2n+1=9$ is composite but $2$ generates 
$(\Z/9\Z)^{\times}$.

We also note the following question: let us fix some 
integer $k\ge 2$; then, is the set of Queneau-like numbers 
with respect to $k$ infinite?
Due to Item~$(4)$, we ask whether $k$ is a primitive root 
modulo infinitely many primes of the form $kn+1$.
This is a difficult question to answer since it is related 
to Artin's primitive root conjecture of 1927 (see the survey~\cite{Moree-2012}).

We end this remark by mentioning {\em conjugate permutations}.
Recall that two permutations $\sigma$ and $\tau$ are \emph{conjugates} if there exists a permutation $\pi$ such that $\tau = \pi \sigma \pi^{-1}$, where we let $\pi^{-1}$ denote the \emph{inverse (permutation)} of $\pi$.
It is clear that conjugate permutations share the same number of cycles in their decomposition into disjoint cycles and their cycles have the same lengths, although the actual elements in the cycles may differ (actually the converse is also true).
In our case, the Queneau-Daniel permutation $\mu_{2n}$ and the $(2,n)$-perfect shuffle permutation $\sigma_{2,n}$ are not necessarily conjugate, e.g., see~\cref{tab:comparaison between mu and sigma}.
\begin{table}
    \centering
    \[
    \begin{array}{c|c|c|c|c|c}
        n & 1 & 2 & 3 & 4 & 5 \\
        \hline
        \mu_{2n} & (1 \; 2) & (1 \; 4 \; 2), (3) & (1 \; 6 \; 3 \; 5 \; 4 \; 2) & (1 \; 8 \; 4 \; 2), (3 \; 7 \; 5 \; 6) &  (1 \; 10 \; 5 \; 8 \; 4 \; 2), (3 \; 9 \; 6), (7)\\
        \hline
        \sigma_{2,n} & (1 \; 2) & (1 \; 2 \; 4 \; 3) & (1 \; 2 \; 4), (3 \; 6 \; 5) & (1 \; 2 \; 4 \; 8 \; 7 \; 5), (3 \; 6)  & (1 \; 2 \; 4 \; 8 \; 5 \; 10 \; 9 \; 7 \; 3 \; 6) 
    \end{array}
    \]
    \caption{For $n\in [1,5]$, we compare the cycle decomposition of the Queneau-Daniel permutation $\mu_{2n}$ and our $(2,n)$-perfect shuffle permutation $\sigma_{2,n}$.}
    \label{tab:comparaison between mu and sigma}
\end{table}
\end{remark}

It turns out that Queneau-like numbers with respect to $k=2$ are related to the famous Josephus problem (e.g., see~\cite[Section~1.3]{Graham-Knuth-Patashnik-1994}).
Let $n\ge 2$ be an integer and put the numbers from the interval $[1,n]$ on a circle.
In a cyclic way, mark the second unmarked number until all $n$ numbers are marked.
For example, with $n=6$, we mark $2$, then $4$, $6$, $3$, $1$, and finally $5$.
Considering the permutation defined by the order in which the numbers are marked, we say that $n$ is a \emph{$J_2$-prime number} if this permutation consists of a single cycle of length $n$.
For every $k\ge 2$, \emph{$J_k$-prime numbers} are defined analogously by marking every $k$th number instead.
The sequences of $J_k$-primes for $k\in [2,20]$ are indexed by~\cite[A163782-A163800]{Sloane}.
We next show that $J_2$-primes are Queneau-like numbers with respect to $k=2$, and vice versa.
For larger values $k$, we note that our Queneau-like numbers with respect to $k$ differ from $J_k$-primes.

\begin{corollary}
\label{cor: J2 prime iff Queneau-like for 2}
Let $n\ge 2$ be integer.
Then $n$ is a $J_2$-prime number if and only if $n$ is a Queneau-like number with respect to $k=2$.    
\end{corollary}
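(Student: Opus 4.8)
The plan is to identify the marking permutation appearing in the definition of a $J_2$-prime with an induced (first-return) map of the doubling permutation $\sigma_{2,n}$, and then to transfer the single-cycle condition to $\sigma_{2,n}$, where it is already controlled by \cref{pro: Queneau-like numbers}.

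First I would model the elimination as a queue process on $[1,n]$: repeatedly send the front element to the back (the ``skip'') and then delete the new front (the ``mark''). Labelling the successive dequeue operations $s=1,2,\ldots,2n$ in time order, each position $j\in[1,n]$ is first dequeued precisely at slot $j$, the $t$-th marked element is deleted at the even slot $s=2t$, and at each odd slot $2t-1$ the dequeued element is sent to the back; since the queue then contains $n-t+1$ elements, it resurfaces at slot $(2t-1)+(n-t+1)=n+t$. The crucial computation is $2(n+t)=(2n+1)+(2t-1)\equiv 2t-1 \pmod{2n+1}$, that is, the slot-successor of a surviving element is multiplication by $2^{-1}$ modulo $2n+1$. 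Unwinding this, I would show that the element marked at step $t$ is the first element of $[1,n]$ met strictly after $t$ along its forward $\sigma_{2,n}$-orbit; in other words, the marking permutation is exactly the first-return map $R$ of $\sigma_{2,n}$ to the subset $[1,n]$.

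Granting that $R$ is this first-return map, its cycles are in bijection with the cycles of $\sigma_{2,n}$ that meet $[1,n]$, each contributing its intersection with $[1,n]$. I would then check that \emph{every} cycle of $\sigma_{2,n}$ meets $[1,n]$: taking the least element $s^\ast$ of a given cycle (as an integer in $[1,2n]$), if $s^\ast\ge n+1$ then $\sigma_{2,n}(s^\ast)=2s^\ast-(2n+1)<s^\ast$, contradicting minimality, so $s^\ast\le n$. Hence $R$ and $\sigma_{2,n}$ have the same number of cycles, namely $C_2(n)$. In particular $R$ is a single $n$-cycle if and only if $C_2(n)=1$, which by \cref{pro: Queneau-like numbers} is equivalent to $n$ being a Queneau-like number with respect to $2$; since $R$ being a single cycle is the definition of $n$ being a $J_2$-prime, this gives the claimed equivalence (and in fact the stronger fact that the Josephus permutation always has $C_2(n)$ cycles).

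The main obstacle is the first step: rigorously proving that the marking permutation equals the first-return map of $\sigma_{2,n}$ to $[1,n]$. The slot bookkeeping must be handled carefully — tracking how the queue length feeds the slot recurrence $2t-1\mapsto n+t$, confirming that marks occur exactly at the even slots (including the degenerate final rounds when only one element remains), and matching ``positions'' with ``entry slots in $[1,n]$''. Once this identification is secured, the cycle-count statement and its specialization to single cycles follow at once.
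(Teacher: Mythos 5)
Your proposal is correct, but it takes a genuinely different route from the paper's. The paper never touches the Josephus process itself: it invokes Asveld's characterization of $J_2$-primes ($2n+1$ prime together with conditions on whether $2$ and $-2$ generate $(\Z/(2n+1)\Z)^{\times}$, split according to $n \bmod 4$), combines it with the equivalence $(1)\Leftrightarrow(3)$ of \cref{pro: Queneau-like numbers}, and closes the gap by number theory alone: for $n\equiv 1,2 \bmod 4$ the behaviour of $-2$ is deduced from $2^n\equiv -1 \bmod{(2n+1)}$, and for $n\equiv 0,3 \bmod 4$ the fact that $2$ is a quadratic residue modulo $8m\pm 1$ yields a contradiction, so those residues cannot occur. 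You instead identify the marking permutation combinatorially with the first-return map of $\sigma_{2,n}$ to $[1,n]$, and your slot bookkeeping is sound: element $j$ is first dequeued at slot $j$, a skip at slot $2t-1$ resurfaces at slot $n+t$, and $2(n+t)\equiv 2t-1 \pmod{2n+1}$ shows that successive slots of a surviving element follow $\sigma_{2,n}^{-1}$, so the element deleted at slot $2t$ is the first point of the forward $\sigma_{2,n}$-orbit of $t$ (strictly after $t$) lying in $[1,n]$; this reproduces, for instance, the paper's example $n=6$ with marking order $2,4,6,3,1,5$. Combined with the standard fact that the cycles of an induced (first-return) map are the traces on $[1,n]$ of the cycles of $\sigma_{2,n}$, and your minimal-element argument that every cycle of $\sigma_{2,n}$ meets $[1,n]$, you obtain the stronger statement that the Josephus permutation has exactly $C_2(n)$ cycles, from which the corollary is immediate (whether one takes $t\mapsto t$-th marked element or its inverse is immaterial for being an $n$-cycle). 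As for what each approach buys: the paper's proof is short given the cited literature and yields arithmetic by-products (e.g., Queneau-like numbers with respect to $2$ are necessarily $\equiv 1,2 \bmod 4$, with the accompanying dichotomy for $-2$), whereas yours is self-contained, avoids Asveld's theorem and quadratic residues entirely, and explains the coincidence structurally; note that your identification is visibly special to $k=2$ (the slot recursion no longer matches multiplication by $k$ when $k>2$), which is consistent with the paper's remark that $J_k$-primes and Queneau-like numbers with respect to $k$ diverge for larger $k$.
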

\begin{proof}
By the equivalence $(1)\Leftrightarrow (3)$ of~\cref{pro: Queneau-like numbers}, it suffices to show that 
an integer $n\ge 2$ is a $J_2$-prime number if and only if $2n+1$ is a prime number and $2$ generates $(\Z/(2n+1)\Z)^{\times}$.
Recall that by~\cite[Theorem~5.12]{Asveld-2011}, an integer $n\ge 2$ is a $J_2$-prime number if and only if $2n+1$ is a prime number and exactly one of the following two conditions holds:
\begin{enumerate}
    \item[{(1)}] $n\equiv 1\bmod{4}$, and $2$ generates $(\Z/(2n+1)\Z)^{\times}$, but $-2$ does not;
    \item[{(2)}] $n\equiv 2\bmod{4}$, and 
 both $2$ and $-2$ generate $(\Z/(2n+1)\Z)^{\times}$.
\end{enumerate}
 Hence if $n$ is a $J_2$-prime number, then  $2n+1$ is a prime number and $2$ generates $(\Z/(2n+1)\Z)^{\times}$, as desired.
 
 Conversely, assume that $2n+1$ is prime and $2$ generates $(\Z/(2n+1)\Z)^{\times}$.
 We distinguish two cases below to show that $n$ satisfies the sufficient condition of~\cite[Theorem~5.12]{Asveld-2011}, so that $n$ is a $J_2$-prime number.
 
 {\bf Case 1:} First assume that $n\equiv 1,2 \bmod{4}$. Since 
$\ord{2}{2n+1}=2n$ and  $(2^{n})^2\equiv 1\bmod (2n+1)$, we have $2^n\equiv -1\bmod (2n+1)$, for $\Z/(2n+1)\Z$ is a field. If $n\equiv 1\bmod{4}$, then $n$ is odd, and thus $(-2)^n\equiv 1\bmod (2n+1)$, so $-2$ cannot generate 
$(\Z/(2n+1)\Z)^{\times}$.  If $n\equiv 2\bmod{4}$, then $n$ is even, and we have $(-2)^n=2^n\equiv -1\bmod (2n+1)$, so $\ord{-2}{2n+1}=2n$, and $-2$ generates 
$(\Z/(2n+1)\Z)^{\times}$. 

{\bf Case 2:} Now assume that $n\equiv 0,3 \bmod{4}$. 
Then we can write $2n+1=8m\pm 1$ for some integer $m$. By~\cite[Proposition~3.9]{Asveld-2011}, there exists an integer $x\in \Z/(2n+1)\Z$ such that $x^2 \equiv 2 \bmod{(2n+1)}$.
Since $2n+1$ is prime, the cardinality of 
$(\Z/(2n+1)\Z)^{\times}$ is $2n$, so we have 
$2^n \equiv x^{2n} \equiv 1 \bmod{(2n+1)}$.
This implies  $2n=\ord{2}{2n+1}\le n$, which 
is absurd and Case 2 cannot occur.
\end{proof}

Using the reasoning in the proof of~\cref{pro: number of cycles in sigma kn} or~\cref{cor: info sur les cycles}, we obtain the following result.
In particular, $C_k(n)$ gives the multiplicity of $1$ as a zero of some polynomial (also see~\cite[Corollaires page~8]{Allouche-1983-1984} for the case $k=2$).

\begin{corollary}
\label{cor: char poly f}
Let $k\ge 2$ and $n\ge 1$ be integers and let $(e_1, e_2, \dots, e_{kn})$ denote the canonical basis of $\R^{kn}$.
Let $f_{k,n}$ be the endomorphism of $\R^{kn}$ defined by $f_{k,n}(e_j) := e_{\sigma_{k,n}(j)}$ for every integer $j\in[1,kn]$, where $\sigma_{k,n}$ is the $(n,k)$-perfect shuffle permutation defined above.
Then the characteristic polynomial of $f_{k,n}$ is
\[
\chi_{f_{k,n}}(X)= \prod_{\substack{d | kn+1 \\ d \neq 1}}
(X^{\ord{k}{d}} - 1)^{\frac{\varphi(d)}{\ord{k}{d}}}.
\] 
\end{corollary}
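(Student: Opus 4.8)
The plan is to exploit the fact that the matrix of $f_{k,n}$ in the canonical basis is precisely the permutation matrix of $\sigma_{k,n}$, and that the characteristic polynomial of any permutation matrix factors according to the cycle structure of the underlying permutation. Concretely, I would first establish the elementary building block: if a single $\ell$-cycle acts on $\ell$ basis vectors, the corresponding operator sends $e_{a_0}\mapsto e_{a_1}\mapsto\cdots\mapsto e_{a_{\ell-1}}\mapsto e_{a_0}$, so its matrix on this $\ell$-dimensional invariant subspace is a cyclic shift matrix whose characteristic polynomial is $X^{\ell}-1$ (its eigenvalues being exactly the $\ell$-th roots of unity, each simple).

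Next I would reduce the general case to this building block. Since $\sigma_{k,n}$ decomposes into disjoint cycles, the space $\R^{kn}$ splits as a direct sum of $f_{k,n}$-invariant subspaces, one per cycle, spanned by the basis vectors indexed by the elements of that cycle. Reordering the canonical basis so that the indices belonging to a common cycle are grouped together amounts to conjugating the matrix of $f_{k,n}$ by a permutation matrix, which leaves the characteristic polynomial unchanged; in the reordered basis the matrix is block diagonal with one cyclic block per cycle. Hence $\chi_{f_{k,n}}(X)=\prod_{\text{cycles }c}(X^{\ell_c}-1)$, where $\ell_c$ denotes the length of the cycle $c$.

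Finally I would feed in the combinatorial data from \cref{cor: info sur les cycles}: for each divisor $d$ of $kn+1$ with $d\neq 1$, the permutation $\sigma_{k,n}$ has exactly $\varphi(d)/\ord{k}{d}$ cycles, each of length $\ord{k}{d}$. Grouping the product over cycles according to the divisor $d$ that produces them turns the product into
\[
\prod_{\substack{d\mid kn+1\\ d\neq 1}}\bigl(X^{\ord{k}{d}}-1\bigr)^{\varphi(d)/\ord{k}{d}},
\]
which is the claimed formula. A useful sanity check on degrees: each factor contributes $\ord{k}{d}\cdot\varphi(d)/\ord{k}{d}=\varphi(d)$, so the total degree is $\sum_{d\mid kn+1,\,d\neq 1}\varphi(d)=(kn+1)-1=kn$ by \cref{lem: famous formula with varphi}, matching $\dim\R^{kn}$.

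I do not anticipate a genuine obstacle here: the only points requiring care are the characteristic polynomial of a single cyclic block, which is a standard computation, and the bookkeeping that converts a product indexed by cycles into one indexed by divisors. The real content has already been carried out in \cref{cor: info sur les cycles}, and this corollary is essentially a linear-algebraic repackaging of the cycle-length statistics established there.
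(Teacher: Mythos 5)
Your proposal is correct and follows essentially the same route as the paper: both identify the matrix of $f_{k,n}$ as the permutation matrix of $\sigma_{k,n}$, block-diagonalize it by reordering the canonical basis along the cycles so that each cycle contributes a factor $X^{\ell}-1$, and then convert the product over cycles into the product over divisors $d \mid kn+1$, $d\neq 1$, using the cycle statistics of \cref{cor: info sur les cycles}. The only cosmetic difference is that the paper cites a classical reference for the permutation-matrix factorization and performs an explicit change of summation index $d' = (kn+1)/d$, while you prove the factorization directly and invoke \cref{cor: info sur les cycles} in its already-reindexed form.
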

\begin{proof}
Let $M_{k,n}$ denote the matrix associated with $f_{k,n}$ in the canonical basis of $\R^{kn}$.
From a classical result on the characteristic polynomial of a permutation matrix (e.g., see~\cite[Section~1]{Aitken-1936}), we obtain
\begin{align}
\label{eq: char poly of permutation matrix}
 \chi_{f_{k,n}}(X) = \det(X I - M_{k,n}) = \prod_{\ell} (X^\ell - 1)^{c_\ell},
\end{align}
where $c_\ell$ is the number of length-$\ell$ cycles in the decomposition of $\sigma_{k,n}$ in a product of cycles. 
(To prove~\cref{eq: char poly of permutation matrix}, it is enough to reorder the elements $e_1,\ldots,e_{kn}$ following the cycles of the decomposition of $\sigma_{k,n}$: first, we start with the vectors $e_1, e_{\sigma_{k,n}(1)}, e_{\sigma_{k,n}^2(1)}, \ldots, e_{\sigma_{k,n}^{t-1}(1)}$, where $t$ is the length of the cycle containing $1$, then we choose a non-already-visited vector $e_m$, and so on and so forth.)

Using the reasoning in the proof of~\cref{pro: number of cycles in sigma kn} leading to computing the possible lengths of cycles and numbers of fixed length cycles in $\sigma_{k,n}$ or~\cref{cor: info sur les cycles}, Equality~\cref{eq: char poly of permutation matrix} becomes
\[
\chi_{f_{k,n}}(X) 
= \prod_{\substack{d | kn+1 \\ d \neq kn+1}}
\left(X^{\ord{k}{\frac{kn+1}{d}}} - 1\right)^{\varphi(\frac{kn+1}{d})/ \ord{k}{\frac{kn+1}{d}}} 
= \prod_{\substack{d | kn+1 \\ d \neq 1}}
\left(X^{\ord{k}{d}} - 1\right)^{\frac{\varphi(d)}{\ord{k}{d}}},
\]
where the second equality is obtained after the change of index
$d' = (kn+1)/d$.
\end{proof}

\begin{example}
We resume~\cref{ex: k=3 and n=5} for which $k=3$ and $n=5$.
If we look at Equality~\cref{eq: char poly of permutation matrix}, we have $1$ length-$1$, $3$ length-$2$ and $2$ length-$4$ cycles in $\sigma_{k,n}$, so in the case the characteristic polynomial of $f_{k,n}$ is $(X-1)(X^2-1)^3(X^4-1)^2$.
This coincides with the formula in~\cref{cor: char poly f} since we obtain 
\[
\left\{\left( \ord{k}{d}, \dfrac{\varphi(d)}{\ord{k}{d}} \right) \Big\lvert \, d \text{ divides } kn+1 \text{ and } d \neq 1\right\}
= \{(1,1),(2,1),(2,2),(4,2)\}.
\]
\end{example}

\section{A second interpretation through arithmetic and algebra}
\label{sec: second interpretation}

For all integers $k\ge 2$ and $n\ge 1$, we define 
the quantity
\begin{equation}\label{eq:ik}
i_k(n):=\sum_{d|n} \frac{\varphi(d)}{\ord{k}{d}}\cdot
\end{equation}
Note that our definition is the same as the definition given
in~\cite{Deaconescu-2008,Moree-2012,Moree-Sole-2005,Pomerance-Shparlinski-2010}, 
but slightly different from the one given 
in~\cite{Rogers-1996,Vasiga-Shallit-2004}.
For $k\in [2,7]$, the first few terms of the 
sequence $(i_k(kn+1))_{n\ge 1}$ are given 
in~\cref{tab: first terms of various sequences i_k(n)}.
The sequence corresponding to $k=2$ is~\cite[A081844]{Sloane}  
(also see the related sequence~\cite[A000374]{Sloane}), while the others do not seem to be indexed in Sloane's OEIS~\cite{Sloane}.
Also note that $i_k(kn+1)$ and $C_k(n)$ only differ by $1$.
\begin{table}[ht]
    \centering
    \[
\begin{array}{c|l}
    k & (i_k(kn+1))_{n\ge 1} \\
    \hline
    2 & 2, 2, 3, 3, 2, 2, 5, 3, 2, 6, 3, 3, 4, 2, 7, 5, 6, 2, 5, 3, \ldots \\
    3 & 3, 2, 4, 5, 7, 2, 6, 3, 7, 2, 4, 3, 13, 2, 6, 3, 15, 6, 4, 7, \ldots \\
    4 & 3, 5, 3, 5, 9, 5, 3, 9, 3, 5, 15, 5, 3, 9, 3, 13, 9, 9, 9, 9, \ldots \\
    5 & 4, 3, 8, 5, 8, 11, 12, 3, 4, 5, 14, 3, 12, 15, 12, 5, 4, 11, 26, 5, \ldots \\
    6 & 4, 2, 3, 9, 6, 10, 15, 7, 10, 2, 3, 3, 2, 10, 11, 9, 2, 2, 15, 3, \ldots \\
    7 & 5, 6, 4, 5, 15, 8, 14, 21, 17, 2, 12, 7, 7, 10, 6, 9, 39, 2, 4, 9, \ldots \\
\end{array}
\]
    \caption{For $k\in [2,7]$, the first few terms of the sequence $(i_k(kn+1))_{n\ge 1}$.}
    \label{tab: first terms of various sequences i_k(n)}
\end{table}
\begin{remark}\label{rk:ik-irreducible-factors}
If $q$ is the order of a finite field, then, applying~\cite[Lemma 5]{Moree-Sole-2005} to the particular case where we look at divisors of $qn+1$ shows that $i_q(qn+1)$ gives the number of distinct irreducible factors of the polynomial $X^{qn+1} -1$ in $\mathbb{F}_q[X]$.
For example, the sequence~\cite[A081844]{Sloane} corresponds to $q=2$.
\end{remark}

Another way to write $i_k(kn+1)$ (or $C_k(n)$) is the 
following one, to which we propose two independent 
proofs: one with an arithmetical flavor, the other on 
the algebraic side.

\begin{proposition}
\label{pro: identity with phi and ord}
For all integers $k\ge 2$ and $n\ge 1$, we have     
\[
i_k(kn+1) =
\frac{1}{\ord{k}{kn+1}} 
\sum_{j = 0}^{\ord{k}{kn+1} - 1} \gcd(k^j - 1, kn + 1).
\]
\end{proposition}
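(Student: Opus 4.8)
The plan is to set $N := kn+1$ and $L := \ord{k}{N}$ and to prove the identity by a double-counting argument on the right-hand side, sketching afterward a second, more conceptual proof via a group action. First I would rewrite each summand on the right using \cref{lem: famous formula with varphi} in the form $\gcd(a,m) = \sum_{d \mid \gcd(a,m)} \varphi(d)$. Applied with $a = k^j - 1$ and $m = N$, this gives
\[
\gcd(k^j - 1, N) = \sum_{\substack{d \mid N \\ d \mid (k^j-1)}} \varphi(d),
\]
where the condition $d \mid (k^j - 1)$ is exactly $k^j \equiv 1 \bmod d$, i.e.\ $\ord{k}{d} \mid j$. Here I use $\gcd(k,N)=1$, which holds since $N = kn+1 \equiv 1 \bmod k$, so that $\ord{k}{d}$ is well defined for every divisor $d$ of $N$.

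Summing over $j \in [0, L-1]$ and interchanging the order of summation then yields
\[
\sum_{j=0}^{L-1} \gcd(k^j-1, N) = \sum_{d \mid N} \varphi(d) \cdot \Card\{\, j \in [0,L-1] : \ord{k}{d} \mid j \,\}.
\]
The key step is to evaluate the inner count. Since every divisor $d$ of $N$ satisfies $\ord{k}{d} \mid L$ (by minimality of the order: $k^{L} - 1$ is a multiple of $N$, hence of $d$, which is the fact already used to show that $\sigma_{k,n}$ has order $\ord{k}{N}$), the number of multiples of $\ord{k}{d}$ in $[0, L-1]$ is precisely $L / \ord{k}{d}$. Substituting and factoring out $L$ gives
\[
\sum_{j=0}^{L-1}\gcd(k^j-1,N) = L \sum_{d \mid N} \frac{\varphi(d)}{\ord{k}{d}} = L \cdot i_k(N),
\]
using the definition~\cref{eq:ik} of $i_k$; dividing by $L$ is the claimed identity.

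For the algebraic proof I would instead let the cyclic group $\langle k \rangle \subseteq (\Z/N\Z)^{\times}$, of order $L$, act on $\Z/N\Z$ by multiplication and apply Burnside's orbit-counting lemma. The number of fixed points of the element acting as $x \mapsto k^j x$ is the number of solutions of $(k^j-1)x \equiv 0 \bmod N$, namely $\gcd(k^j-1, N)$ (with the convention $\gcd(0,N)=N$ covering $j=0$), so the average over $j \in [0, L-1]$ is exactly the right-hand side. On the other hand, partitioning $\Z/N\Z$ according to the value $\gcd(x,N) = N/d$ singles out exactly $\varphi(d)$ elements for each $d \mid N$, and on each such block the orbit size is $\ord{k}{d}$, so the block splits into $\varphi(d)/\ord{k}{d}$ orbits; summing shows the total number of orbits is $i_k(N)$, and equating the two expressions for the number of orbits finishes the proof.

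I expect the only genuine obstacle to be the bookkeeping that makes the inner count exact, namely verifying $\ord{k}{d} \mid L$ for every $d \mid N$ so that $L/\ord{k}{d}$ is an integer counting the multiples correctly. Everything else is a routine interchange of summations (arithmetical version) or a direct application of Burnside together with the standard orbit-size computation on the block $\gcd(x,N) = N/d$ (algebraic version).
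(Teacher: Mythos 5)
Your proposal is correct, and both of your arguments have direct counterparts in the paper's two proofs --- interestingly, with the labels swapped. Your first (``double-counting'') proof is the paper's \emph{algebraic} proof read in the opposite direction: the paper starts from $\sum_{d\mid m}\varphi(d)\,\ord{k}{m}/\ord{k}{d}$, identifies $\ord{k}{m}/\ord{k}{d}$ with $\Card\{j\in[0,\ord{k}{m}-1] \mid d \text{ divides } k^j-1\}$ via the surjection $\langle k\rangle_m \to \langle k\rangle_d$ and Lagrange's theorem, then interchanges summation and applies \cref{lem: famous formula with varphi}; you start from the gcd side, expand each $\gcd(k^j-1,N)$ by that same lemma, interchange, and justify the inner count $L/\ord{k}{d}$ by the elementary observation that $\ord{k}{d}$ divides $L$ --- a more elementary substitute for the group-theoretic step, and your verifications ($\gcd(k,N)=1$, the divisibility $\ord{k}{d}\mid L$, the handling of $j=0$ via $\gcd(0,N)=N$) are all sound. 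Your second (Burnside) proof is essentially the paper's \emph{arithmetical} proof: the paper's quantity $U(n)$ is exactly the Burnside double count of pairs $(j,x)$ with $k^j x\equiv x \bmod N$, except that the paper restricts to $x\in[1,kn]$, which forces the fixed-point count $\gcd(k^j-1,N)-1$ and a trailing $-1$ that must later be reabsorbed as the $d=1$ term; by letting $\langle k\rangle$ act on all of $\Z/N\Z$ (the element $0$ forming the one-element orbit that matches $d=1$) you avoid that bookkeeping and can simply invoke the orbit-counting lemma, which the paper in effect reproves by hand. So nothing is missing, and your packaging is, if anything, slightly cleaner on both counts.
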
  

\begin{remark}\label{rem:without-zero}
Here we have used the usual convention that $\gcd(0,r) = r$
for every integer $r\geq 1$.
Note that, for all integers $k\ge 2$ and $n\ge 1$, we also have
\begin{align}
\label{eq: equality with gcd with and without 0}
    \sum_{j = 0}^{\ord{k}{kn+1} - 1} \gcd(k^j - 1, kn + 1)
= \sum_{j = 1}^{\ord{k}{kn+1}} \gcd(k^j - 1, kn + 1).
\end{align}
\end{remark}

\begin{proof}[``Arithmetical'' proof of~\cref{pro: identity with phi and ord}] For every integer $n\ge 1$, we let $U(n)$ be the quantity
\[
U(n) := \sum_{x=1}^{kn} A(x), 
\]
where $A(x) := \Card \{j \in [0, \ord{k}{kn+1} - 1] \mid (k^j-1)x  \equiv 0 \bmod{(kn+1)}\}$.

Fix some $j \in [0, \ord{k}{kn+1} - 1]$ and take $x\in[1,kn]$.
We have
\[
(k^j-1)x  \equiv 0 \bmod{(kn+1)}
\Leftrightarrow
(k^j-1)x'  \equiv 0 \bmod{\left(\frac{kn+1}{\gcd(x, kn+1)}\right)},
\]
where $x' := \frac{x}{\gcd(x, kn+1)}$.
Since $\gcd\left(x', \frac{(kn+1)}{\gcd(x, kn+1)}\right) = 1$, we get
\[
(k^j-1)x'  \equiv 0  \bmod{\left(\frac{kn+1}{\gcd(x, kn+1)}\right)}
\Leftrightarrow
(k^j-1) \equiv 0 \bmod{\left(\frac{kn+1}{\gcd(x, kn+1)}\right)}.
\]
Thus, we find
\begin{align*}
A(x)
&= \Card \left\{j \in [0, \ord{k}{kn+1} - 1] \mid j \text{ is a multiple of } \ord{k}{\frac{(kn+1)}{\gcd(x, kn+1)}}\right\} 
\\
&= \frac{\ord{k}{kn+1}}{\ord{k}{\frac{kn+1}{\gcd(x, kn+1)}}}.
\end{align*}
(We may even verify that $\ord{k}{\frac{kn+1}{\gcd(x, kn+1)}}$ divides $\ord{k}{kn+1}$.)
Observing that $x\in [1,kn]$ implies that $\gcd(x,kn+1) < kn+1$, we obtain that
\begin{align*}
\frac{U(n)}{\ord{k}{kn+1}} 
&= 
 \displaystyle\sum_{x=1}^{kn} \frac{1}{\ord{k}{\frac{kn+1}{\gcd(x, kn+1)}}} \\
&= \sum_{\stackrel{\scriptstyle d | kn+1}{d \neq kn+1}} 
\   \sum_{\stackrel{\scriptstyle 1 \leq x \leq kn}{\gcd(x,kn+1)=d}}  
\frac{1}{\ord{k}{\frac{kn+1}{d}}} \\
&= \displaystyle\sum_{\stackrel{\scriptstyle d | kn+1}{d \neq kn+1}} 
\frac{1}{\ord{k}{\frac{kn+1}{d}}}
\sum_{\stackrel{\scriptstyle 1 \leq x < kn+1}{\gcd(x,kn+1)=d}} 1.
\end{align*}
We apply the change of index $x'=x/d$ to obtain
\[
\frac{U(n)}{\ord{k}{kn+1}} 
= 
 \displaystyle\sum_{\stackrel{\scriptstyle d | kn+1}{d \neq kn+1}} 
\frac{1}{\ord{k}{\frac{kn+1}{d}}}
\sum_{\stackrel{\scriptstyle 1 \leq x' < (kn+1)/d}{\gcd(x', (kn+1)/d)=1}} 1
= \displaystyle\sum_{\stackrel{\scriptstyle d | kn+1}{d \neq kn+1}}
 \frac{\varphi\left(\frac{kn+1}{d}\right)}{\ord{k}{\frac{kn+1}{d}}}
\]
by definition of $\varphi$.
Using the change of index $d'=(kn+1)/d$, we obtain
\begin{align}
\label{eq: relation U(n) 1}
\frac{U(n)}{\ord{k}{kn+1}} 
= 
 \sum_{\stackrel{\scriptstyle d | kn+1}{d \neq 1}} \frac{\varphi(d)}{\ord{k}{d}}.
\end{align}

On the other hand, we can write 
\begin{align*}
   U(n) 
&= \Card  \{(j,x) \in [0, \ord{k}{kn+1} - 1] \times [1, kn] \mid (k^j-1)x  \equiv 0 \bmod{(kn+1)}\}
\\
&=
\sum_{j = 0}^{\ord{k}{kn+1} - 1} \Card  \{x \in [1, kn] \mid (k^j-1)x \equiv 0 \bmod{(kn+1)}\},
\end{align*}
so that we have
\begin{equation}
\label{eq: relation U(n) 2}
U(n) = 
\sum_{j = 0}^{\ord{k}{kn+1} - 1} \Card  \{x \in [1, kn] \mid (k^j-1)x \equiv 0 \bmod{(kn+1)}\}.
\end{equation}
For $j\in [0, \ord{k}{kn+1} - 1]$, let $d := \gcd(k^j - 1, kn+1)$.
Since $(k^j - 1)/d$ is coprime to $(kn+1)/d$, we have
\[
(k^j - 1)x \equiv 0 \bmod{(kn+1)}
\Leftrightarrow 
\frac{k^j - 1}{d} x \equiv 0 \bmod{\frac{kn+1}{d}} 
\Leftrightarrow 
x \equiv 0 \bmod{\frac{kn+1}{d}}.
\]
This condition together with the fact that $1\le x < kn+1$ is equivalent to saying that there exists an integer $\lambda \in [1, d) = [1, d-1]$ such that $x = \lambda \frac{kn+1}{d}$. Therefore, 
\begin{align*}
\Card  \{x \in [1, kn] \mid (k^j-1)x \equiv 0 \bmod{(kn+1)}\}
&=
\Card \left\{\lambda \in [1, d-1] \mid \lambda \frac{kn+1}{d} \in [1, kn+1)\right\} \\
&= d - 1 \\
&= \gcd(k^j - 1, kn+1) - 1,
\end{align*}
so using~\cref{eq: relation U(n) 2}, we find
\begin{align}
\label{eq: relation U(n) 3}    
\frac{U(n)}{\ord{k}{kn+1}} = 
\frac{1}{\ord{k}{kn+1}} 
\left(\sum_{j = 0}^{\ord{k}{kn+1} - 1} \gcd(k^j - 1, kn+1)\right) - 1.
\end{align}
Comparing~\cref{eq: relation U(n) 1,eq: relation U(n) 3} gives
\begin{align*}
\sum_{\stackrel{\scriptstyle d | kn+1}{d \neq 1}} \frac{\varphi(d)}{\ord{k}{d}}
=
 \frac{1}{\ord{k}{kn+1}} 
\left(\sum_{j = 0}^{\ord{k}{kn+1} - 1} \gcd(k^j - 1, kn+1)\right) - 1,
\end{align*}
which, after adding $1$ on each side and re-absorbing it in the sum in the left-hand side as the term $\frac{\varphi(1)}{\ord{k}{1}}$ for $d=1$ equals $1$, yields the desired result.
\end{proof}

\begin{proof}[Algebraic proof of~\cref{pro: identity with phi and ord}]
For the sake of readability, let $m=kn+1$.
The equality we need to prove is equivalent to
\begin{equation}
\label{eq: plus1}
\sum_{d | m} \varphi(d) \frac{\ord{k}{m}}{\ord{k}{d}} 
=
\sum_{j = 0}^{\ord{k}{m} - 1} \gcd(k^j - 1, m).
\end{equation}

Since $k$ and $m$ are coprime, $k$ belongs to the group $(\Z/m\Z)^{\times}$ of invertible elements of $\Z/m\Z$.
Let $\langle k \rangle_m$ be the subgroup of $(\Z/m\Z)^{\times}$ generated by $k$: it is a group of order $\ord{k}{m}$. 

Let $d$ be a divisor of $m$.
There exists a unique ring morphism from $\Z/m\Z$ to $\Z/d\Z$, which is surjective and its kernel is the ideal $d(\Z/m\Z)$.
The induced morphism from $(\Z/m\Z)^{\times}$ to $(\Z/d\Z)^{\times}$ maps $\langle k \rangle_m$ to $\langle k \rangle_d$.
Consequently, $\langle k \rangle_d$ is the quotient of $\langle k \rangle_m$ by the sub-group $\{y \in \langle k \rangle_m : y \equiv 1 \bmod d\}$.
By Lagrange's theorem, the left-hand side of~\cref{eq: plus1} becomes
\begin{align*}
\sum_{d | m} \varphi(d) \frac{\ord{k}{m}}{\ord{k}{d}} 
&=
\sum_{d | m} \varphi(d) \frac{\Card \langle k \rangle_m}
{\Card \langle k \rangle_d}  \\
&=
\sum_{d | m} \varphi(d) \cdot \Card \{y \in \langle k \rangle_m \mid y \equiv 1 \bmod d\} \\
&= \sum_{d | m} \varphi(d) \cdot \Card \{j \in [0, \ord{k}{m} - 1] \mid \textrm{$d$ divides $(k^j - 1)$}\}.
\end{align*}
The formula in~\cref{eq: set cardinality} yields
\[
\sum_{d | m} \varphi(d) \frac{\ord{k}{m}}{\ord{k}{d}} 
= \sum_{j \in [0, \ord{k}{m} - 1]} \sum_{\stackrel{\scriptstyle d | m}{d | (k^j-1)}} \varphi(d)
= \sum_{j=0}^{\ord{k}{m} - 1} \sum_{d | \gcd(k^j - 1, m)} \varphi(d).
\]
Now the formula of~\cref{lem: famous formula with varphi} gives the right-hand side of~\cref{eq: plus1}, as desired.
\end{proof}

Applying~\cref{pro: identity with phi and ord} to the specific case $k=2$ gives the equality between~\cref{expression1,expression2}.

\begin{corollary}
\label{cor: equality between our two quantities}
For each odd integer $m\ge 3$, we have
\[ \sum_{\substack{d | m \\ d \neq 1}}
 \frac{\varphi(d)}{\ord{2}{d}} =
 \left(\frac{1}{\ord{2}{m}} 
 \sum_{j = 0}^{\ord{2}{m} - 1} \gcd(2^j - 1, m)\right) - 1.
 \]
\end{corollary}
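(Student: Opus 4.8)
The plan is to obtain this as the immediate specialization of \cref{pro: identity with phi and ord} to $k=2$, with the only work being to isolate the $d=1$ term. First I would note that any odd integer $m\ge 3$ can be written as $m=2n+1$ with $n=(m-1)/2$, and that $m\ge 3$ odd forces $n\ge 1$; this sets up a bijection between odd integers $m\ge 3$ and integers $n\ge 1$. With $k=2$ and this $n$ we have $kn+1=2n+1=m$, so \cref{pro: identity with phi and ord} applies and reads
\[
i_2(m)=\frac{1}{\ord{2}{m}}\sum_{j=0}^{\ord{2}{m}-1}\gcd(2^j-1,m).
\]

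Next I would unfold the definition of $i_2(m)$ from \cref{eq:ik}, namely $i_2(m)=\sum_{d\mid m}\frac{\varphi(d)}{\ord{2}{d}}$, and peel off the contribution of the divisor $d=1$. Since $\varphi(1)=1$ and $\ord{2}{1}=1$ by the convention recalled just after \cref{fig: Euler totient}, that contribution equals $1$, whence $\sum_{d\mid m,\,d\neq 1}\frac{\varphi(d)}{\ord{2}{d}}=i_2(m)-1$. Substituting the right-hand side of the displayed identity for $i_2(m)$ gives exactly the claimed equality.

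I expect no genuine obstacle here: the statement is a corollary precisely because the general proposition does all the arithmetic. The only points requiring a word of care are the index bookkeeping (checking that odd $m\ge 3$ corresponds to $n\ge 1$, so that the hypothesis $n\ge 1$ of the proposition is met) and the fact that the $j=0$ summand on the right uses the convention $\gcd(0,m)=m$ recorded in \cref{rem:without-zero}; by \cref{eq: equality with gcd with and without 0} one could equally well write the sum over $j\in[1,\ord{2}{m}]$ if one prefers to avoid the $j=0$ term altogether.
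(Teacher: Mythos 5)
Your proposal is correct and follows exactly the paper's own proof: specialize \cref{pro: identity with phi and ord} to $k=2$ (with $m=2n+1$) and peel off the $d=1$ term, which equals $\varphi(1)/\ord{2}{1}=1$. Your extra remarks on the bijection $m\leftrightarrow n$ and the convention $\gcd(0,m)=m$ are sound bookkeeping but do not change the argument.
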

\begin{proof}
From~\cref{pro: identity with phi and ord} applied to the case $k=2$, we obtain
\[
 \sum_{d | m} \frac{\varphi(d)}{\ord{2}{d}} = \frac{1}{\ord{2}{m}} 
 \sum_{j = 0}^{\ord{2}{m} - 1} \gcd(2^j - 1, m).
\]
As the term corresponding to $d=1$ in the left-hand side of the previous equality is equal to $\frac{\varphi(d)}{\ord{2}{d}} = 1$, we obtain the desired result.
\end{proof}

Actually a third quantity is equal to both members of 
the equality in~\cref{pro: identity with phi and ord}.
Before stating this equality in \cref{pro:troisieme-larron} below, we will give two lemmas.

\begin{lemma}[Apostol]\label{Apostol}
If $a, b, m, n$ are positive integers with $a$ and $b$ 
relatively prime and $a > b$, then $\gcd(a^n - b^n, a^m - b^m) = a^{\gcd(m,n)} - b^{\gcd(m,n)}$.
\end{lemma}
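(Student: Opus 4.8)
The plan is to prove the two divisibilities separately, which suffices since both sides are positive integers: from $a>b\ge 1$ we get $a^t-b^t\ge 1$ for every $t\ge 1$, and in particular for $t=g:=\gcd(m,n)$. For the easy direction, $a^g-b^g\mid\gcd(a^n-b^n,a^m-b^m)$, I would use the elementary factorization $X-Y\mid X^t-Y^t$ in $\Z[X,Y]$. Since $g\mid m$, substituting $X=a^g$, $Y=b^g$, $t=m/g$ yields $a^g-b^g\mid a^m-b^m$, and symmetrically $a^g-b^g\mid a^n-b^n$; hence $a^g-b^g$ divides their gcd.

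The reverse divisibility is the heart of the matter and mimics the Euclidean algorithm at the level of exponents. Assuming without loss of generality that $m>n$, the key identity is
\[
a^m-b^m=a^{m-n}(a^n-b^n)+b^n\bigl(a^{m-n}-b^{m-n}\bigr),
\]
which one verifies by expanding. Setting $D:=\gcd(a^m-b^m,a^n-b^n)$, the identity forces $D\mid b^n(a^{m-n}-b^{m-n})$. The crucial point is then to show $\gcd(D,b)=1$: any prime dividing both $D$ and $b$ would divide $b^n$ and $a^n-b^n$, hence $a^n$, contradicting $\gcd(a,b)=1$. With $\gcd(D,b^n)=1$ in hand, we may cancel the factor $b^n$ and conclude $D\mid a^{m-n}-b^{m-n}$. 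Reading the same identity in the other direction shows conversely that $\gcd(a^{m-n}-b^{m-n},a^n-b^n)$ divides $a^m-b^m$, so the two gcds coincide:
\[
\gcd(a^m-b^m,a^n-b^n)=\gcd\bigl(a^{m-n}-b^{m-n},a^n-b^n\bigr).
\]

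To finish, I would run a strong induction on $m+n$. The base case $m=n$ is immediate, both sides equalling $a^m-b^m$. For the inductive step with $m>n$, the displayed reduction replaces the pair $(m,n)$ by $(m-n,n)$, which has strictly smaller sum and satisfies $\gcd(m-n,n)=\gcd(m,n)=g$; the induction hypothesis then gives $\gcd(a^{m-n}-b^{m-n},a^n-b^n)=a^g-b^g$, completing the proof. Note that the argument never leaves the positive integers, since $m>n\ge 1$ guarantees $m-n\ge 1$, so no degenerate exponent $0$ arises.

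I expect the coprimality step $\gcd(D,b)=1$ to be the main obstacle, both because it is the only place where the hypothesis $\gcd(a,b)=1$ is genuinely used and because it is exactly what licenses cancelling the stray factor $b^n$ that the Euclidean identity introduces; everything else is bookkeeping around the induction.
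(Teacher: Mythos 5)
Your proof is correct, but it is worth noting that the paper does not actually prove this lemma at all: it observes that the case $b=1$ (the only case used later, in \cref{lem:surprising}) is well known, and for the general statement it simply cites Apostol's problem collection, where the result is posed as an exercise and solved in a separate solutions volume. Your argument is therefore a genuine, self-contained alternative: the exponent-level Euclidean algorithm, resting on the identity $a^m-b^m=a^{m-n}(a^n-b^n)+b^n\bigl(a^{m-n}-b^{m-n}\bigr)$, the coprimality observation $\gcd(D,b)=1$ (which is indeed the only place $\gcd(a,b)=1$ enters, and exactly what licenses cancelling $b^n$), and strong induction on $m+n$ with the invariant $\gcd(m-n,n)=\gcd(m,n)$. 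All steps check out, including the positivity remarks that keep the exponents in the positive integers. What your route buys is a proof the reader can verify on the spot instead of chasing two external references; what the paper's route buys is brevity, since the lemma is classical. One small stylistic point: your opening paragraph proving $a^g-b^g\mid\gcd(a^n-b^n,a^m-b^m)$ is redundant, because the induction in your final paragraph already yields the full equality of the two quantities, not merely the reverse divisibility; you could delete that paragraph without loss.
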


\begin{proof}
This result is well-known for the particular case $b=1$
(which is actually the one we will use). The general case
can be found as a problem posed in~\cite[page~49]{Apostol1}, 
with a solution given in~\cite[pages~86--87]{Apostol2}.
\end{proof}

\begin{lemma}\label{lem:surprising}
For all integers $k\ge 2$ and $n\ge 1$, we have the following equalities:    
\begin{align*}
\sum_{j = 0}^{\ord{k}{kn+1} - 1}  \gcd(k^j - 1, kn + 1) 
&= \sum_{j = 1}^{\ord{k}{kn+1}} \gcd(k^j - 1, kn + 1)\\
&= \sum_{d | \ord{k}{kn+1}} \varphi\left(\frac{\ord{k}{kn+1}}{d}\right)
\gcd(k^d - 1, kn + 1).
\end{align*}
\end{lemma}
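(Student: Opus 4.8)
The plan is to write $m = kn+1$ and $N = \ord{k}{m}$ throughout, and to treat the two equalities in turn. The first equality is the content already announced in \cref{rem:without-zero}: the summand at $j=0$ is $\gcd(k^0-1,m)=\gcd(0,m)=m$ by the convention $\gcd(0,r)=r$, while the summand at $j=N$ is $\gcd(k^N-1,m)=m$ because $N=\ord{k}{m}$ forces $m\mid k^N-1$. The two sums agree on every index $j\in[1,N-1]$, and their two differing extremal terms both equal $m$, so the first equality is immediate.

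The substance of the lemma lies in the second equality. First I would establish the reduction
\[
\gcd(k^j-1,m)=\gcd\bigl(k^{\gcd(j,N)}-1,\,m\bigr)\qquad\text{for every }j\ge 1.
\]
Set $g=\gcd(j,N)$. One inclusion of divisors is clear, since $g\mid j$ gives $k^{g}-1\mid k^{j}-1$, whence $\gcd(k^{g}-1,m)\mid\gcd(k^{j}-1,m)$. For the reverse inclusion I would invoke \cref{Apostol} in its case $b=1$, namely $\gcd(k^{j}-1,k^{N}-1)=k^{g}-1$: any common divisor of $k^{j}-1$ and $m$ divides $k^{N}-1$ as well (because $m\mid k^{N}-1$), hence divides $\gcd(k^{j}-1,k^{N}-1)=k^{g}-1$; being moreover a divisor of $m$, it divides $\gcd(k^{g}-1,m)$. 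The two inclusions together yield the displayed identity.

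Granting this reduction, I would rewrite
\[
\sum_{j=1}^{N}\gcd(k^j-1,m)=\sum_{j=1}^{N}\gcd\bigl(k^{\gcd(j,N)}-1,\,m\bigr)
\]
and collect the terms according to the value $d=\gcd(j,N)$, which runs exactly over the divisors of $N$. For a fixed divisor $d\mid N$, the integers $j\in[1,N]$ with $\gcd(j,N)=d$ are those of the form $j=dj'$ with $1\le j'\le N/d$ and $\gcd(j',N/d)=1$, and there are $\varphi(N/d)$ of them (this is exactly the divisor-counting step underlying \cref{lem: famous formula with varphi}). The sum therefore collapses to $\sum_{d\mid N}\varphi(N/d)\,\gcd(k^{d}-1,m)$, which is the right-hand side.

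The step I expect to be the main obstacle is the reduction $\gcd(k^j-1,m)=\gcd(k^{\gcd(j,N)}-1,m)$, where \cref{Apostol} and the defining relation $m\mid k^{N}-1$ must be combined in the right order; once this is in place, the remaining regrouping is the standard count of integers with a prescribed greatest common divisor.
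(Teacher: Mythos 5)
Your proof is correct, and its overall skeleton is the paper's: writing $m=kn+1$ and $N=\ord{k}{m}$, both arguments group the indices $j\in[1,N]$ according to $d=\gcd(j,N)$, both rest on \cref{Apostol} in the case $b=1$, and both finish by counting the $j\in[1,N]$ with $\gcd(j,N)=d$ via $\varphi(N/d)$. Where you genuinely diverge is in the proof of the central reduction $\gcd(k^j-1,m)=\gcd\bigl(k^{\gcd(j,N)}-1,m\bigr)$. The paper introduces the auxiliary integer $x$ defined by $k^{N}-1=x\,m$, multiplies the gcd through by $x$, uses Apostol's lemma to see that $(k^j-1)/(k^d-1)$ and $(k^{N}-1)/(k^d-1)$ are coprime, and then cancels $x$ at the end. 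You instead argue by mutual divisibility: $k^d-1\mid k^j-1$ gives one inequality of divisors, and conversely any common divisor of $k^j-1$ and $m$ also divides $k^{N}-1$ (since $m\mid k^{N}-1$), hence divides $\gcd(k^j-1,k^{N}-1)=k^{d}-1$ by Apostol, hence divides $\gcd(k^{d}-1,m)$. Your route is shorter and more elementary, avoiding the scaling-and-cancellation computation entirely, at no loss of generality; the paper's version keeps everything as a chain of explicit gcd identities, which is more mechanical to check line by line but less transparent about why the identity holds. Your handling of the first equality (the $j=0$ and $j=N$ terms both equal $m$) is also correct and is precisely what \cref{rem:without-zero} records.
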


\begin{proof}
The first equality was already indicated in~\cref{eq: equality with gcd with and without 0} of~\cref{rem:without-zero}.
We now turn to the proof of the second equality.
In order to simplify notation in the proof, we fix $k$ and $n$, and we write $\theta := \theta(k,n) := \ord{k}{kn+1}$ and, for the left-hand side of the equality that we want to prove,
\[
A := A(k,n) := 
\sum_{1 \leq j \leq \theta} \gcd(k^j - 1, kn + 1). 
\]
We group terms in $A$ according to the value of $\gcd(\theta,j)$.
Since the values of this gcd are divisors of $\theta$, we have
\begin{equation}\label{A-double-sum}
A = \sum_{d|\theta}
\ \ \sum_{\substack{1 \leq j \leq \theta \\ 
\gcd(\theta,j) = d}} \gcd(k^j - 1, kn + 1).
\end{equation}
Now, since we have $k^{\theta} \equiv 1 \bmod (kn+1)$, there exists a positive integer $x$ such that 
$k^{\theta} - 1 = x(kn+1)$. Thus, 
\begin{equation}\label{eq:intermediate}
x \gcd(k^j - 1, kn+1) = \gcd(x(k^j-1),x(kn+1))
= \gcd(x(k^j-1), k^{\theta} - 1).
\end{equation}
If we let $\gcd(\theta,j) = d$, we have that the right-hand side of~\cref{eq:intermediate} is equal to 
\begin{equation}\label{eq:intermediate-2}
\gcd(x(k^j-1), k^{\theta} - 1)  = (k^{d} - 1) 
\gcd\left(x\frac{k^j-1}{k^{d}-1}, \frac{k^\theta-1}{k^{d}-1}\right).
\end{equation}
By~\cref{Apostol}, we have $\gcd(k^j-1,k^{\theta}-1) = k^{d}-1$, so $\frac{k^j-1}{k^{d}-1}$ and
$\frac{k^\theta-1}{k^{d}-1}$ are coprime.
Thus, we have
\[
\gcd\left(x\frac{k^j-1}{k^{d}-1}, \frac{k^\theta-1}{k^{d}-1}\right) = \gcd\left(x, \frac{k^\theta-1}{k^{d}-1}\right)
\]
and the right-hand side of~\cref{eq:intermediate-2} is equal to 
\begin{align*}
\gcd(x(k^j-1), k^{\theta} - 1) 
&= \gcd(x(k^{d} - 1), k^{\theta} - 1) 
= \gcd(x(k^{d} - 1), x(kn+1)) \\
&= x \gcd(k^{d} - 1, kn+1).
\end{align*}
Comparing with~\cref{eq:intermediate} yields $\gcd(k^j - 1, kn+1) = \gcd(k^{d} - 1, kn+1)$.
Finally, ~\cref{A-double-sum} becomes
\[
\begin{array}{lll}
A &=& \displaystyle\sum_{d|\theta}
\ \ \sum_{\substack{1 \leq j \leq \theta \\ 
\gcd(\theta, j) = d}}
\gcd(k^{d} - 1, kn+1) \\
&=& \displaystyle\sum_{d|\theta} \gcd(k^{d} - 1, kn+1)
\sum_{\substack{1 \leq j \leq \theta \\ \gcd(\theta, j) = d}} 1 \\
&=& \displaystyle\sum_{d|\theta} 
\gcd(k^{d} - 1, kn+1)
\varphi\left(\frac{\theta}{d}\right),
\end{array}
\]
where, for the third equality, we have used that
\[
\sum_{\substack{1 \leq j \leq \theta \\ \gcd(\theta, j) = d}} 1 = \sum_{\substack{1 \leq j' \leq \theta/d \\ \gcd(\theta/d, j') = 1}} 1 \ = \
\varphi\left(\frac{\theta}{d}\right),
\]
which finishes the proof.
\end{proof}

As a corollary and using 
\cref{pro: identity with phi and ord}, 
we have the following result, which is the case
$a=k$ and $m = kn+1$ of~\cite[Theorem page~2]{Deaconescu-2008}.

\begin{corollary}\label{pro:troisieme-larron}
For all integers $k\ge 2$ and $n\ge 1$, we have  
\[
i_k(kn+1) 
= \frac{1}{\ord{k}{kn+1}}
\sum_{d | \ord{k}{kn+1}} \varphi\left( \frac{\ord{k}{kn+1}}{d}\right) \gcd(k^d - 1, kn+1).
\]
\end{corollary}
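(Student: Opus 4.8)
The plan is to assemble this identity directly from the two results already established, with essentially no extra work: \cref{pro: identity with phi and ord} supplies the ``gcd-average'' formula for $i_k(kn+1)$, while \cref{lem:surprising} converts the relevant sum into the $\varphi$-weighted sum appearing in the statement.

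Concretely, I would first invoke \cref{pro: identity with phi and ord}, which for all integers $k\ge 2$ and $n\ge 1$ gives
\[
i_k(kn+1) = \frac{1}{\ord{k}{kn+1}} \sum_{j = 0}^{\ord{k}{kn+1} - 1} \gcd(k^j - 1, kn + 1).
\]
Then I would apply the second equality of \cref{lem:surprising}, namely
\[
\sum_{j = 0}^{\ord{k}{kn+1} - 1} \gcd(k^j - 1, kn + 1) = \sum_{d | \ord{k}{kn+1}} \varphi\left(\frac{\ord{k}{kn+1}}{d}\right) \gcd(k^d - 1, kn + 1),
\]
to rewrite the inner sum. Substituting this into the previous display while keeping the factor $1/\ord{k}{kn+1}$ in front yields exactly the claimed formula.

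The real content---Apostol's gcd identity \cref{Apostol}, the grouping of the summation index $j$ according to the value of $\gcd(\ord{k}{kn+1}, j)$, and the ensuing appearance of $\varphi$---has already been carried out in the proof of \cref{lem:surprising}, so there is no genuine obstacle remaining at this stage. The only point worth checking is bookkeeping: that the index range $[0, \ord{k}{kn+1}-1]$ produced by \cref{pro: identity with phi and ord} matches the range on which \cref{lem:surprising} operates. This is guaranteed by the first equality of \cref{lem:surprising} (the harmless shift from $[0,\ord{k}{kn+1}-1]$ to $[1,\ord{k}{kn+1}]$, also recorded in \cref{rem:without-zero}). With that alignment confirmed, the identification is immediate and the corollary follows.
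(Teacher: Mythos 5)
Your proposal is correct and matches the paper's own route exactly: the paper obtains \cref{pro:troisieme-larron} precisely by combining \cref{pro: identity with phi and ord} with the second equality of \cref{lem:surprising}, just as you do. The index-range bookkeeping you mention is likewise handled in the paper via \cref{rem:without-zero}, so nothing is missing.
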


\begin{remark}
For an even more detailed study of various 
similar families of permutations, the reader 
can consult~\cite{Patil-Storch-2011}.
\end{remark}

\section{Asymptotics}
\label{sec: asymptotics}

Now we look at the asymptotics of the sequences $(C_k(n))_{n\ge 1}$ and $(i_k(kn+1))_{n\ge 1}$ (also see~\cite[Corollaires page~8]{Allouche-1983-1984} for the case $k=2$).
In the following, we let $\vlog$ denote the natural logarithm (in base $e$).
For two sequences $(U(n))_{n\ge 0}$ and $(V(n))_{n\ge 0}$, we also recall the notation $U(n) = O(V(n))$ if there exists a positive constant $c$ such that $|U(n)| \le c |V(n)|$ for every sufficiently large $n$.

\begin{proposition}
\label{pro: asymptotics}
For all integers $k\ge 2$ and $n\ge 1$, we have
\[
C_k(n) = O\left(\frac{n}{\vlog n}\right) \text{ and } \ 
i_k(kn+1) = O\left(\frac{n}{\vlog n}\right).
\]
\end{proposition}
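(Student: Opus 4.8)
The plan is to prove the estimate for $i_k(kn+1)$ and transfer it to $C_k(n)$: as noted just after the definition of $i_k$, these two quantities differ only by the $d=1$ term $\varphi(1)/\ord{k}{1}=1$, so $C_k(n)=i_k(kn+1)-1$ and any $O(n/\vlog n)$ bound carries over. Writing $m=kn+1$, it then suffices to show that $i_k(m)=\sum_{d\mid m}\varphi(d)/\ord{k}{d}=O(m/\vlog m)$, since $m\le (k+1)n$ and $\vlog m\ge \vlog n$ give $m/\vlog m=O(n/\vlog n)$ for fixed $k$.

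First I would record the elementary bound $\ord{k}{d}>\vlog d/\vlog k$ for every divisor $d\ge 2$: since $k^{\ord{k}{d}}-1$ is a positive multiple of $d$, we have $k^{\ord{k}{d}}>d$, whence $\ord{k}{d}\,\vlog k>\vlog d$. Thus $\varphi(d)/\ord{k}{d}<(\vlog k)\,\varphi(d)/\vlog d$ for $d\ge 2$, and (absorbing the $d=1$ term into a constant) the problem reduces to estimating $S:=\sum_{d\mid m,\,d\ge 2}\varphi(d)/\vlog d$.

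The key step is to split $S$ at $\sqrt m$ and handle the two ranges with different tools. For the large divisors $d>\sqrt m$ we have $\vlog d>\tfrac12\vlog m$, so their total is at most $\frac{2}{\vlog m}\sum_{d\mid m}\varphi(d)=\frac{2m}{\vlog m}$ by \cref{lem: famous formula with varphi}; using $\varphi$ here, whose divisor sum equals $m$, rather than the cruder $\varphi(d)\le d$, is exactly what avoids an extra $\vlog\vlog m$ factor coming from the sum of divisors. For the small divisors $2\le d\le\sqrt m$ I would bound each term by $\sqrt m/\vlog 2$ and multiply by the number of divisors $\Card\{d\ge 1:d\mid m\}$, which is $O(m^{1/4})$ by the standard bound $\Card\{d\mid m\}=O(m^{\varepsilon})$ (valid for every $\varepsilon>0$, here with $\varepsilon=1/4$); this gives a contribution $O(m^{3/4})=O(m/\vlog m)$. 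Combining the two ranges yields $S=O(m/\vlog m)$, hence $i_k(m)=O(m/\vlog m)$, and the proposition follows.

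The main obstacle is controlling the small-divisor range: the naive estimate $\varphi(d)/\vlog d\le \varphi(d)/\vlog 2$ summed over all divisors only gives $O(m)$, so one genuinely needs both the restriction $d\le\sqrt m$ and the subpolynomial divisor bound $\Card\{d\mid m\}=m^{o(1)}$. I also considered starting instead from the gcd-formula of \cref{pro: identity with phi and ord}, namely $i_k(m)=\frac{1}{\ord{k}{m}}\sum_{j}\gcd(k^j-1,m)$, but the terms $\gcd(k^j-1,m)$ for $j$ close to $\ord{k}{m}$ can be a large fraction of $m$ and are awkward to bound uniformly, so the divisor-sum route above seems cleaner.
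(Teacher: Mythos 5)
Your proposal is correct and takes essentially the same route as the paper: split the divisor sum $\sum_{d\mid m}\varphi(d)/\ord{k}{d}$ at a threshold, handle the large divisors with the lower bound $\ord{k}{d}\ge \vlog d/\vlog k$ combined with $\sum_{d\mid m}\varphi(d)=m$ (\cref{lem: famous formula with varphi}), and handle the small divisors by a trivial term bound times a divisor count; your reduction $C_k(n)=i_k(kn+1)-1$ is also exactly how the paper links the two quantities. The one divergence is the threshold: you cut at $\sqrt m$, which forces you to invoke the subpolynomial bound $\Card\{d\mid m\}=O(m^{1/4})$, whereas the paper cuts at $(kn+1)^{\alpha}$ with $\alpha<1/2$ and therefore needs only the elementary bound $\Card\{d\mid m\}\le 2\sqrt m$ (pairing each divisor $d$ with $m/d$); if you want your argument equally self-contained, cutting at, say, $m^{1/3}$ gives a small-divisor contribution $O(m^{1/3}\cdot\sqrt m)=O(m^{5/6})$ with only the elementary divisor bound.
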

\begin{proof}
To prove the statement, the idea is to split the summation 
range in $i_k(kn+1)$ into two parts, one with 
``small'' divisors of $kn+1$ and the other with ``large'' 
ones. More precisely, we separate the set of $d$'s dividing
$\ord{k}{kn+1}$ into the set of $d$'s that are less than or 
equal to $(kn+1)^{\alpha}$ and the set of those with are 
larger than $(kn+1)^{\alpha}$ for some ``small'' $\alpha$ 
that will be chosen later on.
Let us write
\[
\sum_{d | kn+1} \frac{\varphi(d)}{\ord{k}{d}} = S_1(n) + S_2(n),
\]
with 
\[
S_1(n) := \sum_{\substack{d | kn+1 \\ \ d \leq (kn+1)^{\alpha}}}
\frac{\varphi(d)}{\ord{k}{d}} 
\quad 
\text{ and }
\quad
S_2(n) := \sum_{\substack{d | kn+1 \\ d > (kn+1)^{\alpha}}} 
\frac{\varphi(d)}{\ord{k}{d}}.
\]
To obtain the desired result, we bound each term $S_1$ and $S_2$.

To obtain an upper bound for $S_1$, we use the fact that $\ord{k}{d} \ge 1$ and the observation that, if $d \leq (kn+1)^{\alpha}$, then $\varphi(d) \leq d \leq (kn+1)^{\alpha}$.
Therefore
\[
S_1(n) \leq (kn+1)^{\alpha} 
\left(\sum_{\substack{d | kn+1 \\ d \leq (kn+1)^{\alpha}}} 1 \right) \
\leq \ (kn+1)^{\alpha} \ \ \Card \{d\in [1,kn+1]\mid \textrm{$d$ divides $(kn+1)$}\}.
\]
Since the number of divisors of a positive integer $m$ satisfies the inequality
\[
\Card \{d\in [1,m]\mid\textrm{$d$ divides $m$}\} \leq 2 \sqrt{m}
\]
(we may group the divisors of $m$ pairwise, i.e., $d$ and $m/d$, and one of them is $\leq \sqrt{m}$), we have
\begin{align}
\label{eq: upper bound for S1}
    S_1(n) \leq 2(kn+1)^{\alpha+1/2} = O(n^{\alpha + 1/2}) = O\left(\frac{n}{\vlog n}\right)
\end{align}
for $\alpha < 1/2$.

To obtain an upper bound for $S_2$, we note that if $k^\ell \equiv 1 \bmod d$ with $\ell \neq 0$ and $d \neq 1$, then $k^\ell \geq d+1$, so $\ell\geq \vlog(d+1)/\vlog k$.
So, for the summation indices $d$ that appear in $S_2(n)$, we have 
\[
\ord{k}{d} \geq \frac{\vlog(d+1)}{\vlog k} \geq \frac{\alpha\vlog(kn+1)}{ \vlog k},
\]
since $d>(kn+1)^\alpha$.
It follows that
\[
    S_2(n) 
 \leq \left(\sum_{\substack{d | kn+1 \\ d > (kn+1)^{\alpha}}}
    \varphi(d)\right) \ O\left(\frac{1}{\vlog n} \right)
 \leq \left(\sum_{d | kn+1} \varphi(d)\right) \ O\left(\frac{1}{\vlog n} \right) 
 = (kn+1) \ O\left(\frac{1}{\vlog n} \right),
\]
where we used the well-known formula of~\cref{lem: famous formula with varphi} for the last equality.
We obtain 
\begin{align}
\label{eq: upper bound for S2}    
S_2(n) = O\left(\frac{n}{\vlog n}\right).
\end{align} 
Putting together~\cref{eq: upper bound for S1,eq: upper bound for S2} gives the desired asymptotics results.
\end{proof}

\begin{remark}
\label{rk: optmimality of bounds}
We note that~\cite[Corollaires page~8]{Allouche-1983-1984} already indicates the optimality of the previous bound for $(C_2(n))_{n\ge 1}$.
Here we show that, more generally, $n/\vlog n$ is the {\em right order of magnitude} for $i_k(kn+1)$.
Namely, let $\ell \geq 2$ be an integer, and set $n := k^{\ell} - k^{\ell-1} - 1$.
Thus, $kn+1 = (k^{\ell} - 1) (k - 1)$.
In particular, if $d$ divides $(k^{\ell} - 1)$, then
$d$ also divides $(kn+1)$, and furthermore
$\ord{k}{d} \leq \ord{k}{k^{\ell}-1} \leq \ell$.
Hence we get
\[
i_k(kn+1)=\sum_{d|(kn+1)}\frac{\varphi(d)}{\ord{k}{d}}
\geq \sum_{d | (k^{\ell}-1)} \frac{\varphi(d)}{\ord{k}{d}}
\geq \sum_{d | (k^{\ell}-1)} \frac{\varphi(d)}{\ell}
= \frac{k^{\ell} - 1}{\ell},
\]
where~\cref{lem: famous formula with varphi} is used 
for the last equality. Now, we are done, since when 
$\ell \to \infty$, which implies that $n \to \infty$, 
we have
\[
\frac{k^{\ell} - 1}{\ell} \sim
\frac{k^{\ell}}{\ell}
\quad \text{and} \quad
\frac{n}{\vlog n} \sim 
\left(\frac{k-1}{k \vlog k}\right) \frac{k^{\ell}}{\ell},
\]
where, as usual, the notation $f(x)\sim g(x)$ means that 
$f(x)/g(x)$ tends to $1$, when $x$ goes to infinity.


\end{remark}

\section{Unexpected occurrences of doubling modulo odd integers and other occurrences of the map \texorpdfstring{$i_k$}{i k}}\label{sec: occurrences}

In this section we propose to review some other occurrences of the doubling-modulo-an-odd-integer map, as well as occurrences of the map $i_k$.

\subsection{Toeplitz transforms and apwenian sequences}\label{sec:origin}
We begin this section by recalling the context for Toeplitz transforms and apwenian sequences that lead us to write this note.

$*$ In~\cite{Allouche-1983-1984} a notion of Toeplitz transform was studied, consisting of inserting a 
sequence with holes into itself and iterating the process 
until all holes have disappeared.
To illustrate the concept, we give the example of the regular paperfolding sequence.

\begin{example}
We start with the sequence $w = 0 \diamond 1 \diamond 0 \diamond 1 \diamond 0 \diamond 1 \diamond \cdots$, which consists of repeating the pattern $(0 \diamond 1 \diamond)$ infinitely many times.
The symbol $\diamond$ is the \emph{hole} and the next step is to insert the first sequence inside the holes, which yields the new sequence $0 0 1 \diamond 0 1 1 \diamond 0 0 \cdots$.
Repeating this procedure a second time gives the new sequence $0 0 1 0 0 1 1 \diamond 0 0 \cdots$, and so on and so forth.
The limit sequence obtained after infinitely many such insertions is called the \emph{Toeplitz transform} of the initial sequence $w$.
\end{example}

One of the questions addressed in~\cite{Allouche-1983-1984} consists in characterizing the binary periodic sequences having a periodic Toeplitz transform.
In particular, it is shown that the number of such sequences of period $(2n+1)$ is equal to $2^{C_2(n)+1}$, where we recall that $C_2(n)$ is the number of cycles of the $(2,n)$-perfect shuffle permutation $\sigma_n$ defined in~\cref{eq: permutation sigma n}.

\medskip

$*$ Now we turn to the concept of apwenian sequences and show how they relate to doubling modulo odd integers.
In~\cite{Guo-Han-Wu-2021}, a sequence $d= d_0 d_1 d_2 \cdots$ taking value over $\{+1,-1\}$ is said to be {\em apwenian} if, for every integer $n \geq 1$, its Hankel determinant $H_n(d)$ satisfies the congruence
\[
\frac{H_n(d)}{2^{n-1}} \equiv 1 \bmod 2.
\]
In their study of apwenian sequences that are fixed points of constant-length morphisms of the free monoid generated by $\{-1, +1\}$, the authors of~\cite{Guo-Han-Wu-2021} introduce the 
permutation $\tau_m$ defined in~\cref{eq: permutation tau m}.
More precisely, this permutation and its cycle decomposition allows them to count the number of apwenian sequences satisfying certain properties, see~\cite[Section~1.1]{Guo-Han-Wu-2021}.

\subsection{Dynamical systems}

Let $X$ be a compact metric space and $T\colon X\rightarrow X$ be a continuous transformation. We call $(X,T)$ a \emph{topological dynamical system}.
Let $Y$ be a closed nonempty subset of $X$ such that $T(Y)\subset Y$. Then the restriction of $T$ on $Y$ induces a topological dynamical system on $Y$, denoted $(Y,T)$ and called a \emph{subsystem} of $(X,T)$. Such a subsystem is called a \emph{minimal component} of $(X, T)$ if $(Y, T)$ is minimal, i.e., for each $y\in Y$, the orbit $\{T^my: m\geq 0\}$ is dense in $Y$. 
Let $k\geq 2$ and $n\geq 1$ be integers, and consider the topological dynamical system $(X,\sigma_{k,n})$, where the finite set $X=(\Z/(kn+1)\Z)\setminus \{0\}$ (endowed with the trivial metric) is compact.
The results in the present note can be reformulated as dynamical properties of $(X,\sigma_{k,n})$. For example,
$C_k(n)$ is just the number of minimal components contained in $(X, \sigma_{k,n})$.

More generally, let $a,b,m$ be fixed integers with $m\ge 2$, and consider the topological dynamical system $(\mathbb{Z}/m\mathbb{Z}, T_{a,b,m})$, where $T_{a,b,m}(x)=ax+b$, for each $x\in\mathbb{Z}/m\mathbb{Z}$. It is known that the system $(\mathbb{Z}/m\mathbb{Z}, T_{a,b,m})$
is minimal if and only if it is transitive, which is equivalent to saying that $b$ is coprime with $m$,
$a-1$ is a multiple of $p$ for every prime $p$ dividing $m$, and 
$a-1$ is a multiple of $4$ if $m$ is a multiple of $4$ (e.g., see~\cite[Theorem A, page 17] {Knuth-1998}. Also see
\cite{Larin-2002}). In particular, the system $(\Z/m\Z, T_{a,0,m})$ cannot be minimal. If $a$ is coprime with $m$, one can consider the multiplicative dynamical system $((\Z/m\Z)^{\times}, T_{a,0,m})$, and it is minimal if and only if $\ord{a}{m}=\varphi(m)$. 

By taking the projective limit of $\mathbb{Z}/p^\ell\mathbb{Z}\ (\ell\geq 1)$, one can extend the above results to $p$-adic dynamical systems. For example, the topological dynamical system $(\mathbb{Z}_p, T_{\alpha,\beta})$ is minimal if and only if $\alpha\in 1+p^{r_p}\mathbb{Z}_p$ and $\beta\in\mathbb{Z}_p^{\times}$ (see~\cite{Anashin-2006}, also see 
\cite{FLYZ-2007}),  and if the system is not minimal, it can be decomposed into minimal components, here $\mathbb{Z}_p$ is the ring of $p$-adic integers, $\mathbb{Z}_p^{\times }$ is the group of invertible elements in $\mathbb{Z}_p$, and we suppose that $\alpha,\beta\in\mathbb{Z}_p$, $r_2=2$, $r_p = 1$ for $p\geq 3$, and $T_{\alpha,\beta}(x)=\alpha x+\beta$, for each $x\in\mathbb{Z}_p$  (see~\cite{FLYZ-2007}). 
In the case that $\alpha\in \mathbb{Z}_p^{\times}$, one can also consider the multiplicative dynamical system $(\mathbb{Z}_p^{\times}, T_{\alpha,0})$, and discuss its minimality and its decomposition into minimal components (also see~\cite{FLYZ-2007}).

\subsection{The map \texorpdfstring{$i_k$}{i k}}

Note that other properties and interpretations of the map $i_k$ defined in~\cref{eq:ik} and other 
generalizations may be found, for example, in the 
works~\cite{Chasse1984,Chou-Shparlinski-2004,CRHII-2014,Deaconescu-2008,Larson-2019,Moree-2012,Moree-Sole-2005,Pomerance-Shparlinski-2010,Pomerance-Shparlinski-2018,Pomerance-Ulmer-2013,Rogers-1996,Sha-2011,Ulmer-2002,Ulmer-2014,Vasiga-Shallit-2004}.

\medskip

$*$ For example, in~\cite{Deaconescu-2008}, already cited 
above, the following result gives a characterization of 
Mersenne primes.

\begin{corollary}[{\cite[Corollaries~1 and~6]{Deaconescu-2008}}]
Let $k, n\ge 2$ be integers. Then 
\begin{align}
\label{eq: inequality for Mersenne primes}
 n i_k(k^n-1) = 
\sum_{d|n} \varphi\left(\frac{n}{d}\right) (k^d - 1) \geq 
\sum_{d|n} \frac{n}{d} \varphi(k^d-1).   
\end{align}
In particular, $k^n - 1$ is a (Mersenne)
prime if and only if~\cref{eq: inequality for Mersenne primes} is an equality.
\end{corollary}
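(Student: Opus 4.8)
The plan is to derive the claimed identity directly from \cref{pro:troisieme-larron} and \cref{Apostol}, and then to read off the inequality by recognizing its right-hand side as a \emph{partial} sum of the defining expression of $i_k$. The first thing I would record are the two order computations that make everything collapse: for every $d\ge 1$ one has $\ord{k}{k^d-1}=d$, since $k^d\equiv 1\bmod(k^d-1)$ while no smaller positive exponent works (a relation $k^j\equiv 1\bmod(k^d-1)$ with $1\le j<d$ would force $k^d-1$ to divide the strictly smaller positive integer $k^j-1$). In particular $\ord{k}{k^n-1}=n$. Applying the identity of \cref{pro:troisieme-larron} — or rather its extension to an arbitrary modulus coprime to $k$, which is precisely the cited theorem of Deaconescu and whose proof through \cref{lem:surprising} and \cref{pro: identity with phi and ord} only uses $k^{\ord{k}{m}}\equiv 1$ and \cref{Apostol} — to the modulus $m=k^n-1$ gives
\[
i_k(k^n-1)=\frac{1}{n}\sum_{d\mid n}\varphi\!\left(\frac{n}{d}\right)\gcd(k^d-1,\,k^n-1).
\]
By \cref{Apostol} with $b=1$ we have $\gcd(k^d-1,k^n-1)=k^{\gcd(d,n)}-1=k^d-1$ whenever $d\mid n$, so multiplying by $n$ yields the first equality $n\,i_k(k^n-1)=\sum_{d\mid n}\varphi(n/d)(k^d-1)$.

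For the inequality I would compare both sides against the defining sum $i_k(k^n-1)=\sum_{\delta\mid(k^n-1)}\varphi(\delta)/\ord{k}{\delta}$. Using $\ord{k}{k^d-1}=d$, the right-hand side rewrites as
\[
\sum_{d\mid n}\frac{n}{d}\,\varphi(k^d-1)=n\sum_{d\mid n}\frac{\varphi(k^d-1)}{\ord{k}{k^d-1}},
\]
which is exactly $n$ times the partial sum of $\sum_{\delta\mid(k^n-1)}\varphi(\delta)/\ord{k}{\delta}$ taken over the sub-collection of divisors $\{k^d-1:d\mid n\}$ of $k^n-1$ (each $k^d-1$ indeed divides $k^n-1$ when $d\mid n$). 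Since every summand $\varphi(\delta)/\ord{k}{\delta}$ is positive, shrinking the index set can only decrease the total, giving $\sum_{d\mid n}\tfrac{n}{d}\varphi(k^d-1)\le n\,i_k(k^n-1)$, as desired.

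That same comparison pins down the equality case: equality holds if and only if the sub-collection $\{k^d-1:d\mid n\}$ exhausts all divisors of $k^n-1$, i.e.\ every divisor of $k^n-1$ has the shape $k^d-1$ with $d\mid n$. I would then translate this into the primality statement. If $k^n-1$ is prime, necessarily $k=2$ and $n$ is prime (for $k\ge 3$ the factor $k-1\ge 2$ already makes $k^n-1$ composite, and a proper divisor $d\mid n$ would produce the proper divisor $2^d-1$), so the only divisors $1=2^1-1$ and $2^n-1$ are both of the required shape and equality holds. The main obstacle is the converse: showing that a composite $k^n-1$ always carries a divisor that is \emph{not} of the form $k^d-1$. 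For $k\ge 3$ this is immediate, since $1$ divides $k^n-1$ while $k^d-1\ge k-1\ge 2$ for all $d\ge 1$, forcing strict inequality. The delicate case is $k=2$, where I would exhibit a primitive prime divisor $p$ of $2^n-1$ (one with $\ord{2}{p}=n$), which exists by Zsygmondy's theorem except for the single value $n=6$, handled by direct inspection. Such a $p$ is of the required form only if $p=2^n-1$, which is impossible once $2^n-1$ is composite; hence a non-special divisor exists and the inequality is strict. This confirms that equality is equivalent to $k^n-1$ being a (necessarily Mersenne) prime.
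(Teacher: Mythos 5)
Your proof is correct, but there is no proof in the paper to compare it against: the paper only quotes this corollary from Deaconescu \cite{Deaconescu-2008} as one of the occurrences of the map $i_k$ in the literature. What you have produced is a self-contained derivation from the paper's own machinery, and it is sound. Your key structural claim --- that the proofs of \cref{pro: identity with phi and ord} and \cref{lem:surprising} use nothing about the modulus $kn+1$ beyond its coprimality with $k$, so that \cref{pro:troisieme-larron} extends to any modulus $m$ with $\gcd(k,m)=1$ (which is exactly Deaconescu's theorem) --- is accurate; both the arithmetical and the algebraic arguments go through verbatim. Specializing to $m=k^n-1$, where $\ord{k}{k^n-1}=n$ and, for $d\mid n$, $\gcd(k^d-1,k^n-1)=k^d-1$ (here you do not even need \cref{Apostol}: $d\mid n$ already implies $(k^d-1)\mid(k^n-1)$), gives the identity. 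Your reading of $\sum_{d\mid n}\frac{n}{d}\varphi(k^d-1)$ as $n$ times the partial sum of $i_k(k^n-1)=\sum_{\delta\mid k^n-1}\varphi(\delta)/\ord{k}{\delta}$ over the pairwise distinct divisors $k^d-1$, $d\mid n$, is also correct, and it cleanly reduces the equality case to the assertion that \emph{every} divisor of $k^n-1$ has the form $k^d-1$ with $d\mid n$.

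Two remarks on your treatment of the equality case. First, the appeal to Zsigmondy/Bang for $k=2$ works but is heavier than necessary: if $2^n-1$ is composite and all of its divisors had the form $2^d-1$, pick a prime divisor $2^a-1$ and its cofactor $2^e-1=(2^n-1)/(2^a-1)>1$; then $a,e\ge 2$, and expanding $(2^a-1)(2^e-1)=2^n-1$ yields $2^n+2^a+2^e=2^{a+e}+2$, whose left-hand side is $\equiv 0 \bmod 4$ while the right-hand side is $\equiv 2 \bmod 4$ --- a contradiction requiring no external theorem and covering the exceptional value $n=6$ uniformly. Second, you assert that $n=6$ is ``handled by direct inspection'' without doing the inspection; for completeness, the check is that $2^6-1=63$ has the divisor $9$ (or $21$), which is not of the form $2^d-1$. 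Neither point is a gap, only an opportunity to make the argument both shorter and fully explicit.
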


\medskip

$*$ Another occurrence of the map $i_k$ (and several of its 
properties) can be found in~\cite{Moree-Sole-2005} 
where {\em very odd sequences} are studied.
Let $(a_i)_{1\le i\le n}$ be a sequence of $n$ integers in $\{0, 1\}$ (otherwise stated, a binary sequence of length $n$) and set $A_\ell = \sum_{1 \leq i \leq n-\ell} a_ia_{i+\ell}$, for $\ell\in [0,n-1]$.
The sequence $(a_i)_{1\le i\le n}$ is called a \emph{very odd sequence} if $A_\ell$ is odd for each 
$\ell\in [0,n-1]$.

\begin{proposition}[{\cite[Proposition~2]{Moree-Sole-2005}}]
For each integer $n\ge 1$, the number $S(n)$ of very odd sequences of length $n$ is given by 
\[
S(n) =
\begin{cases}
    0, \ &\text{if $\ord{2}{2n-1}$ is even;} \\
    \sqrt{2}^{i_2(2n-1) - 1}, 
    &\text{otherwise.}
\end{cases}
\]
\end{proposition}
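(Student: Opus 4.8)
The plan is to translate the combinatorial condition into a factorization problem over $\mathbb{F}_2$. First I would encode a binary sequence $(a_i)_{1\le i\le n}$ by the polynomial $a(X)=\sum_{i=1}^n a_i X^{i-1}\in\mathbb{F}_2[X]$ and observe that the autocorrelations $A_\ell$ are exactly the coefficients of the product $a(X)\tilde a(X)$, where $\tilde a(X):=X^{n-1}a(1/X)$ is the reciprocal polynomial: the coefficient of $X^{n-1+\ell}$ equals $A_{|\ell|}$. Reducing modulo $2$, the very odd condition ``$A_\ell$ is odd for all $\ell\in[0,n-1]$'' becomes the single identity $a(X)\tilde a(X)=1+X+\cdots+X^{2n-2}=\frac{X^{2n-1}+1}{X+1}$ in $\mathbb{F}_2[X]$. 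Writing $m:=2n-1$ and $R:=\frac{X^{m}+1}{X+1}$, this identity forces $a(0)=a_1=1$ and $\deg a=n-1$ (so $a_n=1$), and conversely every such factorization yields a valid sequence. Hence $S(n)$ equals the number of $a\in\mathbb{F}_2[X]$ with $a\,\tilde a=R$.

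Next I would exploit that $m$ is odd, so $X^{m}+1$ is squarefree over $\mathbb{F}_2$, whence $R$ is squarefree and self-reciprocal ($\tilde R=R$). The reciprocal map $p\mapsto\tilde p$ is a degree-preserving involution on the set of monic irreducible factors of $R$ (each has nonzero constant term, as $R(0)=1$). In a factorization $R=a\tilde a$ both $a$ and $\tilde a$ are squarefree and their irreducible factors partition those of $R$, the factors of $\tilde a$ being the reciprocals of the factors of $a$. A self-reciprocal factor $p=\tilde p$ cannot occur in such a partition, since it would appear in both $a$ and $\tilde a$ and force $p^2\mid R$; so if $R$ has a self-reciprocal irreducible factor, then no factorization exists and $S(n)=0$. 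Otherwise the factors split into reciprocal pairs $\{p,\tilde p\}$, and choosing, for each pair, which member divides $a$ produces all factorizations, giving $S(n)=2^{t}$ where $t$ is the number of pairs. Since the number of irreducible factors of $X^{m}+1$ over $\mathbb{F}_2$ is $i_2(m)$ (the cyclotomic factorization $X^{m}+1=\prod_{d\mid m}\Phi_d$, each $\Phi_d$ splitting into $\varphi(d)/\ord{2}{d}$ factors, by the argument underlying \cref{rk:ik-irreducible-factors}), removing the factor $X+1$ coming from the $d=1$ term leaves $i_2(m)-1$ factors of $R$, so $2t=i_2(m)-1$ and $S(n)=\sqrt{2}^{\,i_2(2n-1)-1}$.

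It remains to identify when $R$ has a self-reciprocal irreducible factor. An irreducible factor of $\Phi_d$ has for root set a Frobenius orbit $\{\zeta^{2^i}\}$ of a primitive $d$-th root of unity $\zeta$, and it is self-reciprocal exactly when this orbit is stable under inversion, i.e. $2^i\equiv-1\bmod d$ for some $i$, i.e. $-1\in\langle 2\rangle$ in $(\mathbb{Z}/d\mathbb{Z})^{\times}$. So $R$ has a self-reciprocal factor iff some $d\mid m$ with $d>1$ satisfies $-1\in\langle 2\rangle$ modulo $d$, and I would prove this is equivalent to $\ord{2}{m}$ being even. One direction is immediate: if $2^j\equiv-1\bmod d$, then $\ord{2}{d}$ is even (it divides $2j$ but not $j$) and $\ord{2}{d}\mid\ord{2}{m}$, so $\ord{2}{m}$ is even. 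For the converse, if $\ord{2}{m}=2s$ then $2^s$ has order $2$ modulo $m$, so $m\mid(2^s-1)(2^s+1)$ while $m\nmid 2^s-1$; since $m$ is odd, the factors $2^s-1$ and $2^s+1$ are coprime, whence $d:=\gcd(m,2^s+1)$ is a divisor of $m$ with $d>1$ and $2^s\equiv-1\bmod d$. Combining the three situations yields the stated formula. The main obstacle, and the crux of the argument, is this second step: cleanly establishing the dichotomy governed by self-reciprocal factors together with the exact count $2^{t}$, and matching the existence of such a factor with the parity of $\ord{2}{m}$; the remaining work is bookkeeping with the convolution identity and the cyclotomic factorization.
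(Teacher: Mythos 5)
Your proof is correct, but note that the paper itself contains no proof of this statement: it is quoted verbatim from Moree--Sol\'e as a known result (an ``occurrence'' of the map $i_k$), so there is no internal argument to compare against. What you have written is a sound, self-contained proof along the classical route (the one underlying the cited literature): encode the sequence as $a(X)\in\mathbb{F}_2[X]$, observe that the very odd condition is exactly $a\tilde a=(X^{2n-1}+1)/(X+1)$, use squarefreeness of $X^{m}+1$ for odd $m=2n-1$ to reduce the count to choosing one member from each reciprocal pair of irreducible factors, invoke the factorization $X^{m}+1=\prod_{d\mid m}\Phi_d$ with $\Phi_d$ splitting into $\varphi(d)/\ord{2}{d}$ factors (the fact behind \cref{rk:ik-irreducible-factors}) to get the exponent $i_2(m)-1$, and characterize the obstruction (a self-reciprocal irreducible factor) by $-1\in\langle 2\rangle \bmod d$ for some $d\mid m$, $d>1$, which you correctly prove is equivalent to $\ord{2}{m}$ being even in both directions (the $\gcd(m,2^s+1)$ construction for the converse is exactly right). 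Two small points deserve one explicit line each if you write this up: first, in a factorization $R=a\tilde a$ of the squarefree $R$, \emph{exactly} one member of each pair $\{p,\tilde p\}$ divides $a$ (both is excluded by squarefreeness, neither is excluded because $p\mid\tilde a$ forces $\tilde p\mid a$, using that $a(0)=\tilde a(0)=1$ makes reciprocation an involution); second, whether an irreducible factor of $\Phi_d$ is self-reciprocal is independent of which Frobenius orbit one takes, since stability of the orbit of $\zeta^a$ under inversion reads $-a\equiv a2^i \bmod d$ and the unit $a$ cancels, so all factors of $\Phi_d$ are simultaneously self-reciprocal or not. With those details filled in, your argument is complete and consistent with the small cases ($S(2)=S(3)=0$, $S(4)=2$).
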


We have already cited in~\cref{rk:ik-irreducible-factors}
a statement that can be found, e.g., 
in~\cite[Lemma 5]{Moree-Sole-2005}, namely that, if $q$ is the order of a finite field, then $i_q(qn+1)$ is the number of distinct irreducible 
factors of the polynomial $X^{qn+1} -1$ on $\mathbb{F}_q$. 
Actually one also finds the following result.

\begin{proposition}[{\cite[Remark, page~224]{Moree-Sole-2005}}]
Let $p$ be a prime number. 
Then for each integer $n$ with $p \nmid n$, $i_p(n)$ counts the total number of prime 
ideals that the ideal $(p)$ factorizes in, in all the cyclotomic 
subfields of ${\mathbb Q}(\zeta_n)$, where $\zeta_n$
is a primitive $n$-th root of unity.
\end{proposition}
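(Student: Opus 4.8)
The plan is to match the divisor sum defining $i_p(n)$ term by term against the splitting of $(p)$ in the cyclotomic subfields of $\mathbb{Q}(\zeta_n)$. Recall from~\cref{eq:ik} that $i_p(n) = \sum_{d \mid n} \varphi(d)/\ord{p}{d}$, where by the convention $\ord{p}{1}=1$ the term $d=1$ (the field $\mathbb{Q}$, in which $p$ stays prime) contributes $1$. The cyclotomic subfields of $\mathbb{Q}(\zeta_n)$ are exactly the fields $\mathbb{Q}(\zeta_d)$ for $d \mid n$: under the Galois correspondence $\mathrm{Gal}(\mathbb{Q}(\zeta_n)/\mathbb{Q}) \cong (\mathbb{Z}/n\mathbb{Z})^{\times}$, these are the fixed fields of the kernels of the reduction maps $(\mathbb{Z}/n\mathbb{Z})^{\times} \to (\mathbb{Z}/d\mathbb{Z})^{\times}$. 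I would therefore read the assertion as a sum over the divisors $d \mid n$, counting for each $d$ the number of prime ideals of the ring of integers of $\mathbb{Q}(\zeta_d)$ lying above $p$.

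The arithmetic core is the classical decomposition law for a rational prime in a cyclotomic field. Fix $d \mid n$; since $p \nmid n$ we have $p \nmid d$, so $p$ is unramified in $\mathbb{Q}(\zeta_d)/\mathbb{Q}$. As this extension is abelian with group $(\mathbb{Z}/d\mathbb{Z})^{\times}$, the decomposition group of any prime $\mathfrak{p}$ above $p$ is independent of $\mathfrak{p}$ and is cyclic, generated by the Frobenius automorphism $\sigma_p \colon \zeta_d \mapsto \zeta_d^{\,p}$, which corresponds to the class of $p$ in $(\mathbb{Z}/d\mathbb{Z})^{\times}$. Consequently the residue degree is $f = \ord{p}{d}$, the ramification index is $e=1$, and the number of primes above $p$ equals
\[
g = \frac{[\mathbb{Q}(\zeta_d):\mathbb{Q}]}{ef} = \frac{\varphi(d)}{\ord{p}{d}}.
\]

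Summing this count over all divisors $d \mid n$ then yields exactly $\sum_{d \mid n} \varphi(d)/\ord{p}{d} = i_p(n)$, which is the claim. The step carrying the real content is the decomposition law invoked above; everything else is bookkeeping. Two points deserve care. First, one must use $p \nmid n$ to guarantee that $p$ is unramified in every $\mathbb{Q}(\zeta_d)$, so that the clean formula $g = \varphi(d)/\ord{p}{d}$ holds with $e=1$; this is precisely the hypothesis of the statement. Second, when $n \equiv 2 \bmod 4$ distinct divisors produce coinciding fields $\mathbb{Q}(\zeta_m)=\mathbb{Q}(\zeta_{2m})$ for odd $m$, so ``all cyclotomic subfields'' must be understood as indexed by the divisors $d \mid n$, with $\mathbb{Q}(\zeta_d)$ possibly repeated; one checks that two equal fields contribute equal terms, since $\varphi(2m)=\varphi(m)$ and $\ord{p}{2m}=\ord{p}{m}$ for the odd prime $p$, so the divisor sum is indeed the correct count under this reading.
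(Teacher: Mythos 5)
Your proof is correct, and it is worth noting that the paper itself offers no proof of this proposition at all: it is quoted verbatim from Moree--Sol\'e (the cited ``Remark, page~224''), so your argument supplies exactly the justification the citation points to. The route you take is the standard and, in essence, the only natural one: identify the cyclotomic subfields of $\mathbb{Q}(\zeta_n)$ with the fields $\mathbb{Q}(\zeta_d)$ for $d \mid n$, invoke the splitting law in cyclotomic fields (unramified since $p \nmid d$, residue degree $f=\ord{p}{d}$ via the Frobenius $\zeta_d \mapsto \zeta_d^p$, hence $g=\varphi(d)/\ord{p}{d}$ primes above $p$), and sum over divisors to recover $i_p(n)$ from~\cref{eq:ik}. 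Two details you handle deserve explicit credit: the $d=1$ term matching the convention $\ord{p}{1}=1$ (the prime $p$ ``counts once'' in $\mathbb{Q}$), and the coincidence $\mathbb{Q}(\zeta_m)=\mathbb{Q}(\zeta_{2m})$ for odd $m$ when $n \equiv 2 \bmod 4$, which genuinely forces the ``indexed by divisors, with multiplicity'' reading of the statement --- your verification that $\varphi(2m)=\varphi(m)$ and $\ord{p}{2m}=\ord{p}{m}$ (legitimate since $2\mid n$ forces $p$ odd) is exactly what makes that reading consistent. One small addendum: an equivalent route, closer in spirit to the paper's~\cref{rk:ik-irreducible-factors}, is to use Dedekind's theorem to identify the primes above $p$ in $\mathbb{Q}(\zeta_d)$ with the irreducible factors of the cyclotomic polynomial $\Phi_d$ modulo $p$, so that the total count becomes the number of distinct irreducible factors of $X^n-1$ in $\mathbb{F}_p[X]$; this ties the present proposition to the polynomial-factorization interpretation of $i_p$ stated earlier in the paper, but it buys nothing beyond what your decomposition-group argument already gives.
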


\medskip

$*$ In the papers~\cite{Ulmer-2002,Pomerance-Shparlinski-2010} 
the quantity $i_k$ appears in the study of the
parametric family of elliptic curves
${\mathbf E}_d :$ \ $y^2 + xy = x^3 - t^d$, over the 
function field ${\mathbb F}_q (t)$, where $d$ is a 
positive integer. In the same vein, one can look 
at~\cite[Corollary~5.3]{Ulmer-2014} and 
\cite[Theorem~2.2]{CRHII-2014} (also see~\cite{Pomerance-Ulmer-2013}).

\medskip

$*$ In~\cite[page~320]{Rogers-1996} one finds (a slight 
variation on) $i_k$ with the following statement.
For a finite set $G$ and a map $f \colon G \to G$ from $G$ to
itself, let $\graph_f(G)$ be the directed graph whose 
vertices are the elements of $G$ with a directed 
edge from $x$ to $f(x)$ for every $x \in G$.

\begin{proposition}[{\cite[page~320]{Rogers-1996}}]
Let $k,m,n$ be positive integers with $m$ odd and $n = 2^k m$.
Then the number of cycles of $\graph_f({\mathbb Z}/n{\mathbb Z})$
relative to $f(x):= x^2$ is equal to $i_k(m)$.
\end{proposition}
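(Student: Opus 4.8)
The plan is to reduce the squaring map to the doubling permutations already studied in this note, by combining the Chinese Remainder Theorem with the discrete logarithm. Writing $n = 2^k m$ with $m$ odd, the ring isomorphism $\Z/n\Z \cong \Z/2^k\Z \times \prod_{p \mid m} \Z/p^{a_p}\Z$ (the product over the odd primes $p$ dividing $m$, with $p^{a_p}$ the exact power) intertwines the squaring maps on the factors, so $\graph_f(\Z/n\Z)$ is the product of the squaring graphs of the factors. I would use the elementary principle that, in a product of functional graphs, a cycle of length $\ell$ and a cycle of length $\ell'$ merge into exactly $\gcd(\ell,\ell')$ cycles, each of length $\lcm(\ell,\ell')$; consequently the number of cycles of the product is the $\gcd$-convolution of the cycle numbers of the factors.

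First I would isolate, on each factor, the periodic core (the eventual image), since only it carries cycles. In $\Z/p^{a}\Z$ a nonunit has strictly increasing $p$-adic valuation under squaring until it reaches $0$, while a unit $x$ is periodic exactly when $x^{2^\ell}=x$ for some $\ell\ge 1$, i.e.\ when $x$ has odd multiplicative order; hence the periodic points are $\{0\}$ together with the cyclic subgroup $H_{p^{a}}$ of units of odd order, whose size $t_{p^{a}}$ is the odd part of $\varphi(p^{a})$. On $\Z/2^k\Z$ the unit group is a $2$-group, so for $k\ge 1$ the only periodic points are the fixed points $0$ and $1$.

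The core computation is on a single cyclic factor $H_{p^{a}}$. Choosing a generator turns $x\mapsto x^2$ into the doubling map $y\mapsto 2y$ on $\Z/t_{p^{a}}\Z$; as $t_{p^{a}}$ is odd, this is precisely the permutation whose cycles this note counts, so $H_{p^{a}}$ contributes $i_2(t_{p^{a}})=\sum_{e\mid t_{p^{a}}}\varphi(e)/\ord{2}{e}$ cycles, with $\varphi(e)/\ord{2}{e}$ cycles of length $\ord{2}{e}$ for each $e\mid t_{p^{a}}$. Adjoining the fixed point $0$ then determines all cycles, with their lengths, on each odd prime-power factor.

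Finally I would assemble the global count by applying the $\gcd$-convolution across the CRT factors, the factor $\Z/2^k\Z$ simply doubling the total when $k\ge 1$ (its two periodic points being fixed), and then verify that the outcome agrees with the value $i_k(m)$ in Rogers' formulation. I expect this last step to be the main obstacle: because squaring is not bijective on the whole ring one must first restrict to the periodic core on every factor, and the assembly requires tracking cycle lengths, not merely cycle counts, through the $\gcd/\lcm$ convolution (the interaction of, say, length-$2$ and length-$4$ cycles shows the count is not multiplicative in $m$), so that matching the slightly different definition of $i_k$ adopted in Rogers' paper is a genuinely combinatorial bookkeeping task rather than a formal manipulation.
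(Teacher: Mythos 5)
The paper itself does not prove this proposition---it is quoted from Rogers' article as one of the ``occurrences'' of the map $i_k$---so your attempt can only be judged on its own terms, and on those terms it has a fatal gap rather than a fixable one: you have read $f(x)=x^2$ as the \emph{ring}-squaring map on $\Z/n\Z$ and decomposed it by CRT, but under that reading the statement you are trying to prove is simply false, so the ``bookkeeping'' you defer at the end can never be carried out. The smallest case already fails: for $n=2$ (so $k=m=1$), ring squaring fixes both $0$ and $1$, giving two cycles, while $i_k(1)=1$. More structurally, your own (correct) observation that for $k\ge 1$ the factor $\Z/2^k\Z$ contributes exactly the two fixed points $0,1$ forces your total count to be even, whereas the right-hand side is frequently odd: for $n=30$ the periodic points of ring squaring are the eight residues congruent to $0$ or $1$ modulo each of $2,3,5$, all fixed, so your count is $8$, while $i_2(15)=1+\tfrac{\varphi(3)}{\ord{2}{3}}+\tfrac{\varphi(5)}{\ord{2}{5}}+\tfrac{\varphi(15)}{\ord{2}{15}}=5$. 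Note also that, read literally with the paper's definition \cref{eq:ik}, the quantity $i_k(m)$ is not even well defined whenever $\gcd(k,m)>1$ (e.g.\ $n=24$, $k=m=3$, where $\ord{3}{3}$ does not exist); this is a strong hint that the subscript in the proposition cannot be the paper's order-base.

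The reading under which the proposition is true---and the one that makes it an occurrence of the themes of this paper---is that $\graph_f$ is taken for the squaring map on the \emph{cyclic group} of order $n$ (Rogers works with $\mathbb{F}_p^{\times}$, $n=p-1$); written additively this is the doubling map $x\mapsto 2x$ on $\Z/n\Z$, and the constant appearing is $i_2(m)$, the ``slight variation'' in notation the paper alludes to just before the statement. With that reading the proof is three lines and uses exactly the paper's machinery: $x$ is periodic for doubling modulo $n=2^k m$ if and only if $(2^j-1)x\equiv 0 \bmod{2^k m}$ for some $j\ge 1$, and since $2^j-1$ is odd this holds precisely when $2^k \mid x$; the periodic points therefore form the subgroup $2^k\Z/n\Z\cong\Z/m\Z$, on which doubling acts as the fixed point $0$ together with the permutation $\sigma_{2,(m-1)/2}$, so by \cref{pro: number of cycles in sigma kn} the number of cycles is $1+\sum_{d\mid m,\, d\neq 1}\varphi(d)/\ord{2}{d}=i_2(m)$. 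Your CRT/$\gcd$--$\lcm$ framework is internally sound (the description of periodic points, odd-order units, and the discrete-logarithm reduction are all correct), but it is aimed at a different map; and even within your own plan the argument never closes, since you explicitly leave the decisive comparison as ``the main obstacle''---which is exactly the point where the mismatch above would have surfaced.
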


Also see~\cite[Section~5]{Rogers-1996} for other results
with $i_k$ in this context. For results about iterations 
of maps and counting cycles, see~\cite{Vasiga-Shallit-2004} 
and its bibliography (in particular~\cite{Chasse1984}), also
see~\cite[Theorem~1]{Chou-Shparlinski-2004}; for more recent 
occurrences of (variants of) $i_k$, e.g., see 
\cite[Proposition~3.1~(6)]{Sha-2011}, \cite{Pomerance-Shparlinski-2018} and~\cite{Larson-2019}.

\subsection{The Luhn algorithm}

There is one more example of doubling modulo an odd integer, somehow ``simple'' but widely used nowadays, namely the {\em Luhn algorithm} or \emph{Luhn formula} originally described by Luhn in 1960~\cite{Luhn}.
In this particular case, we write numbers in base $10$ but we double some digits modulo $9$ and the goal is to distinguish valid numbers from incorrect ones and to detect errors in writing numbers.
More precisely, with each integer $n$ written in base $10$, say $n = a_d a_{d-1} \dots a_0$ with $a_i\in[0,9]$ for every $i$, is associated the integer $n' = a'_d a'_{d-1} \dots a'_0$ obtained with the following rules: for $j = d, d-2, d-4, \ldots$, we set $a'_j = a_j$, and for $j = d-1, d-3, d-5, \ldots$, we set $a'_j = 2a_j \bmod 9$, where $m \bmod 9$ is the integer congruent to $m$ modulo $9$ that belongs to $[0, 8]$.
Then there are two possible uses: either the sum $S := \sum_j a'_j$ of new digits is ``checked'' to be divisible by $10$, or $S$ is used to generate a {\em key} $x$, where $x$ is defined by 
$x \in [0, 9]$ and $S+x$ divisible by $10$.

\subsection{Card-shuffling}

Shuffling cards has always been somewhere between magic and mathematics.
The mathematical study of card shuffle goes back at least to Monge~\cite{Monge-1773} in the 18th century. 
A particular card shuffling is called \emph{le battement de Monge} in French and \emph{Monge card shuffle} in English 
(see \cite[Problem~15, pages~214--222]{Bachet} 
and~\cite{Bourget}; also see 
\cite{Roubaud-2006, Roubaud-2014}), it is associated 
with the permutation, called the {\em Monge permutation} and defined by
\[
\left(
\begin{matrix}
1 &2 &3  &\ldots &n \ &n + 1 &\ldots &2n-2 \ &2n-1 \ &2n \\
2n\ &2n-2 \ &2n-4 &\ldots &2 &1 &\ldots &2n-5 \ &2n-3 \ &2n-1 \\
\end{matrix}
\right)
\]
for some integer $n\ge 1$.
Observe that the latter permutation is similar to 
(but different from) the permutation 
$\sigma_n$ from~\cref{eq: permutation sigma n}.
Also note that a variant of the Monge card shuffle occurs 
in~\cite[Section~4]{Roberts-1969} in relation with 
quadratic residues and primitive roots.

\smallskip

A particular card-shuffling and the mathematics behind
were explicitly described in several papers. Let us cite 
\cite{Diaconis-Graham-Kantor-1983,Ellis-Fan-Shallit-2002,Morris-Hartwig-1976}. 
For example one can read in~\cite{Ellis-Krahn-Fan-2000}
the following comment about the permutation $x \mapsto 2x \bmod (n+1)$ over $[1,n]$, 
called {\em the perfect shuffle permutation of order n}:

\begin{quote}
{\em The name is taken from
that method of shuffling a deck of cards that cuts the
deck into halves and then interleaves the two halves
perfectly.}
\end{quote}
Note that in the book~\cite{Diaconis-Graham-2019}, one
can find (on page~166) the map $x \to 2x \bmod 11$.
Among several other papers on the subject, let us 
only cite the four papers~\cite{Medvedoff-Morrison-1987} 
(where the heaps are shuffled according to some 
permutation, and where---on page~6---the permutation 
$\sigma_{k,n}$ is denoted by $w_\text{rev}$), \cite{Ramnath-Scully, Quintero-2010}, and \cite{Lachal-2010}.

\subsection{Juggling}

First we mention the nice book~\cite{Polster-2003} that
tells everything one always wanted to know about mathematics 
and juggling. In particular, a reformulation of the statement 
on top of~\cite[page~36]{Polster-2003}, where one takes 
$q=3$ and $p=2n+1$ (with $n\not\equiv 1 \bmod 3$) gives the 
following result.

\begin{proposition}
Let $n$ be a positive integer with $n\not\equiv 1 \bmod 3$.
Then the permutation $\sigma_n$ gives a {\em magic juggling sequence} when extending it to a map on $[0,2n]$ by setting $\sigma_n(0) := 0$.
\end{proposition}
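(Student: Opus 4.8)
The plan is to treat this proposition as a direct reformulation of the statement on top of \cite[page~36]{Polster-2003}, so that the real work is the translation of conventions together with a single short arithmetic check. First I would recall the relevant definitions. A period-$p$ \emph{juggling sequence} is a tuple of non-negative throw heights $(t_0,t_1,\ldots,t_{p-1})$ whose \emph{landing map} $i\mapsto (i+t_i)\bmod p$ is a permutation of $\{0,1,\ldots,p-1\}$; the sequence is \emph{magic} when, in addition, the heights $t_0,\ldots,t_{p-1}$ form a complete residue system modulo $p$, i.e., each of $0,1,\ldots,p-1$ occurs exactly once as a throw. I would then quote Polster's general statement in its parametric form: for integers $q$ and $p$, the throw sequence in which the ball thrown at beat $i$ lands at beat $qi\bmod p$ (equivalently $t_i\equiv (q-1)i \bmod p$) is a magic juggling sequence precisely when $\gcd(q,p)=1$.

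Next I would specialize to $q=3$ and $p=2n+1$ and identify the resulting object with $\sigma_n$. With these values the throw at beat $i$ is $t_i\equiv (q-1)i = 2i \bmod (2n+1)$, which is exactly $\sigma_n(i)$, and the convention $\sigma_n(0):=0$ supplies the beat-$0$ throw $t_0=0$. Thus the throw sequence is literally the list of values $\sigma_n(0),\sigma_n(1),\ldots,\sigma_n(2n)$, i.e., the extension of $\sigma_n$ to $[0,2n]$. The associated landing map is then $i\mapsto (i+t_i)\bmod(2n+1) = 3i \bmod (2n+1)$, which is the tripling map; this is where the parameter $q=3$ enters, namely as the multiplier $1+2$ coming from adding the throw height $2i$ to the beat $i$.

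It then remains to match the hypotheses. The magic condition is automatic: since $2n+1$ is odd we have $\gcd(2,2n+1)=1$, so the throws $t_i=2i\bmod(2n+1)$ already run through every residue exactly once. The genuine constraint is validity, i.e., that the landing map $i\mapsto 3i\bmod(2n+1)$ be a permutation, which holds if and only if $\gcd(3,2n+1)=1$. Here I would carry out the only computation needed: since $3$ is prime, $\gcd(3,2n+1)=1$ fails exactly when $3\mid (2n+1)$, and $2n+1\equiv 0\bmod 3$ is equivalent to $2n\equiv -1\equiv 2\bmod 3$, hence to $n\equiv 1\bmod 3$. Therefore the validity condition is precisely $n\not\equiv 1\bmod 3$, which is the hypothesis of the proposition, and the conclusion follows.

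I expect the main obstacle to be neither the arithmetic nor the magic check, both of which are short, but rather pinning down Polster's conventions so that the reformulation is faithful: in particular, confirming that $\sigma_n$ plays the role of the \emph{throw} sequence rather than the landing permutation, and tracking that the juggling multiplier is $q=3$ while the map $\sigma_n$ itself doubles. Once this dictionary between the two notational frameworks is fixed, the identification of the extended $\sigma_n$ with Polster's $q=3$ magic sequence is immediate and the statement is established.
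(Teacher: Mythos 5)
Your proposal is correct and follows exactly the route the paper intends: the paper offers no written proof at all, only the remark that the proposition is a reformulation of the statement on top of page~36 of Polster's book with $q=3$ and $p=2n+1$, and your dictionary (throws $t_i = 2i \bmod (2n+1)$ with $t_0=0$, landing map $i \mapsto 3i \bmod (2n+1)$, validity $\gcd(3,2n+1)=1 \Leftrightarrow n \not\equiv 1 \bmod 3$, magicness from $\gcd(2,2n+1)=1$) is precisely that reformulation carried out in full. One small caution: your paraphrase of Polster's general statement (``magic precisely when $\gcd(q,p)=1$'') conflates validity with magicness --- in general one also needs $\gcd(q-1,p)=1$ --- but since you verify the magic condition separately using the oddness of $2n+1$, your argument for the proposition itself is complete.
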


\subsection{Bell-ringing}

Bell-ringing is both an art and a science.
For example, the abstract of a lecture given at Gresham College 
by Hart and entitled ``The Mathematics of Bell Ringing''~\cite{Hart-2021} begins with the following lines: 

\begin{quote}
{\em This lecture will look at change ringing, which is 
ringing a series of tuned bells (as you might find in the 
bell tower of a church) in a particular sequence, and this 
has exciting mathematical properties},
\end{quote}

\noindent
while one can read on~\cite[page~116]{Roaf-White-2003}:

\begin{quote}
{\em Compositions in ringing are designed to include 
musically attractive sequences, which are usually based 
on sequences running up or down the scale (``roll-ups''), 
sometimes with single notes omitted to produce slightly 
larger intervals of pitch. }
\end{quote}

For more on bell-ringing, also 
see~\cite{Stedman-1677, Jaulin-1977,Roaf-White-2003}, 
\cite[Chapter~6]{Polster-2003}, the first page of~\cite{Mayfield}, and the nice video~\cite{Hart-2021}.

Now, looking at~\cite[page~116]{Roaf-White-2003} again,
one can read:

\begin{quote}
{\em Taking every other note of the scale, and running 
through the scale twice so as to include all the bells, 
produces $135246$ on six bells, or $13572468$ on eight. 
This is the best known of these favourite rows; it is 
called {\em Queens}, because a Queen of England is said 
to have commented on how nice the bells sounded when she 
heard it being rung. 

Reversing the first half of Queens on six bells produces 
Whittingtons: $531246$; when heard by Dick Whittington 
leaving London as ``Turn again, Whittington; Lord May'r of 
London", it persuaded him to return and, eventually, become 
Lord Mayor.}
\end{quote}

\noindent
The reader has certainly ``recognized'' the permutations
$135246$ and $13572468$ that have the same flavor as our $\sigma_3=(2 \; 4 \; 6 \; 1 \; 3 \; 5)$ and $\sigma_4= (2 \; 4 \; 6 \; 8 \; 1 \; 3 \; 5 \; 7)$ as above with~\cref{eq: permutation sigma n} for $n=3,4$.
Actually there is more than a common flavor as shown below.

\begin{lemma}\label{thm}
Let $n$ be a positive integer.
Define the Queens permutation $\theta_n$ on $[1, 2n]$ by
\begin{equation}
\label{eq: theta n}
\theta_n :=
\left(
\begin{matrix}
1 \ &2 &\cdots &n \ &n+1 \ &n+2 \ &\cdots &2n \\
1 \ &3 &\cdots &2n-1 \ &2 \ &4 &\cdots &2n \\
\end{matrix}
\right).
\end{equation}
Then, the restriction of $\theta_n$ to $[2, 2n-1]$ is
the same, up to notation, as the permutation $\sigma_{2,n-1}$.
\end{lemma}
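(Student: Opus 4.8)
The plan is to realize the claimed coincidence ``up to notation'' as an explicit relabeling, exactly in the spirit of the identification of $\tau_m$ with $\sigma_n$ in the introduction. First I would read off from~\cref{eq: theta n} the two-branch formula $\theta_n(j) = 2j-1$ for $j \in [1,n]$ and $\theta_n(j) = 2j - 2n$ for $j \in [n+1, 2n]$. Evaluating at the extremities gives $\theta_n(1) = 1$ and $\theta_n(2n) = 2n$, so $1$ and $2n$ are fixed points; hence $\theta_n$ carries the ``middle block'' $[2, 2n-1]$ bijectively onto itself, and the restriction $\theta_n|_{[2,2n-1]}$ is a genuine permutation of the $(2n-2)$-element set $[2,2n-1]$. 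This already matches, in cardinality, the domain $[1, 2(n-1)]$ of $\sigma_{2,n-1}$, which is a first sanity check.

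Next I would propose the obvious order-preserving relabeling $\pi \colon [2, 2n-1] \to [1, 2n-2]$ given by $\pi(x) = x - 1$, and recall that ``same up to notation'' means $\pi \circ (\theta_n|_{[2,2n-1]}) \circ \pi^{-1} = \sigma_{2,n-1}$. Writing $x = y+1$ with $y \in [1, 2n-2]$, this reduces to the single pointwise identity
\[
\theta_n(y+1) - 1 = 2y \bmod (2n-1) = \sigma_{2,n-1}(y).
\]
Everything then comes down to verifying this identity, which I would do along the two branches of $\theta_n$. If $y \in [1, n-1]$, then $y+1 \in [2,n]$, so $\theta_n(y+1) = 2(y+1)-1 = 2y+1$ and the left-hand side equals $2y$; since $2y \le 2n-2 < 2n-1$ here, the reduced residue on the right is also $2y$. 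If instead $y \in [n, 2n-2]$, then $y+1 \in [n+1, 2n-1]$, so $\theta_n(y+1) = 2(y+1)-2n$ and the left-hand side equals $2y - (2n-1)$; and because $2n-1 \le 2y \le 4n-4 < 2(2n-1)$ throughout this range, we have $2y \bmod (2n-1) = 2y - (2n-1)$, matching.

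The argument is essentially a direct computation, so I do not expect a substantive obstacle. The only place demanding genuine attention is the second case, where one must confirm that the standard representative of $2y$ modulo $2n-1$ really lands where the formula for $\theta_n$ predicts; equivalently, one is checking that $\theta_n$ is nothing but the doubling map $j \mapsto 2j-1 \bmod (2n-1)$ written with a nonstandard choice of residues precisely at the two fixed endpoints $1$ and $2n$, which is exactly why those endpoints drop out upon restriction. Once the two-case bookkeeping is done, the relabeling $\pi$ conjugates $\theta_n|_{[2,2n-1]}$ to $\sigma_{2,n-1}$, establishing the statement.
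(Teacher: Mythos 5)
Your proof is correct and follows essentially the same route as the paper: after observing that $1$ and $2n$ are fixed, you conjugate the restriction by the shift $x \mapsto x-1$, which is exactly the paper's ``change of variable $k \mapsto k-1$.'' The only cosmetic difference is that you verify the match against the modular definition $y \mapsto 2y \bmod (2n-1)$ of $\sigma_{2,n-1}$ with a two-case computation, whereas the paper simply rewrites the two-row matrix and compares it with~\cref{eq: permutation sigma n}.
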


\begin{proof}
Note that excluding $1$ and $2n$ is somehow ``natural'' 
since the {\em Queens} permutation maps the first bell 
to the first and the last bell to the last.
Now the restriction of $\theta_n$ to $[2, 2n-1]$ gives
\begin{equation*}
\theta_n|_{[2,2n-1]} :=
\left(
\begin{matrix}
2 & 3 &\cdots &n \ &n+1 \ &n+2 \ &\cdots &2n-1 \\
3 & 5 &\cdots &2n-1 \ &2 \ &4 &\cdots &2n-2
\end{matrix}
\right).
\end{equation*}
After the change of variable $k \mapsto k-1$, we obtain the
permutation
\begin{equation*}
\left(
\begin{matrix}
1 & 2 &\cdots &n-1 \ &n \ &n+1 \ &\cdots &2n-2 \\
2 & 4 &\cdots &2n-2 \ &1 \ &3 &\cdots &2n-3 \\
\end{matrix}
\right),
\end{equation*}
which is exactly the permutation $\sigma_{2,n-1}$ (defined on $[1,2n-2]$) from~\cref{eq: permutation sigma n}.
\end{proof}

\subsection{Poetry}\label{poetry}

A variant of the permutation $\sigma_{2,3}$ can be found in 
various papers on poetry and literature with connections to 
mathematics; (e.g., see
\cite{Roubaud-1969}, \cite[page~65]{Audin-2007}, 
\cite{Audin2,Bringer-1969,Dumas-2008,Queneau-1967,Tavera-1967}; 
also see the paper~\cite{Audin-2013} and the 
book~\cite{Lartigue1994}). This permutation is given by
\[
\nu_6 = 
\left(
\begin{matrix}
1 \ &2 \ &3 \ &4 \ &5 \ &6 \\
2 \ &4 \ &6 \ &5 \ &3 \ &1 \\
\end{matrix}
\right)
\]
and is related to the so-called \emph{sextine} in French 
and \emph{sestina} in English.
(Note that this permutation $\nu_6$ is the inverse of the 
Queneau-Daniel permutation $\mu_6$ defined 
in~\cref{sec: first interpretation}, see \cref{lev} below.)
In this form of poetry, the last words of a \emph{stanza} of 
six lines are permuted according to the previous permutation 
and used as ending words in the next stanza of six lines.
It was invented by the 12th century poet Arnaut Daniel (for his poem 
and an English translation, e.g., see~\cite{Saclolo-2011}).
Over time, cases with $n$ elements instead of $6$, as well
as variations around these permutations were 
studied, both on the literary and the mathematical sides.
In particular, we mention the book \cite{Roubaud-2014}, 
with its chapter called {\em Le battement de Monge} 
\cite[pages~145--163]{Roubaud-2014} (for more on {\em tropical 
partitions} described there, see~\cite{Guilbaud-1972}); also
see~\cite{BL-BV-U}.

\begin{remark}\label{lev}
We cannot resist to mention that the permutation $\nu_6$ above resembles
very much permutations in the family studied by L\'evy in 
\cite[Chapter~1]{Levy-1950}. Namely, this variant $\nu_6$ of 
$\sigma_{2,3}$ is the case $n=6$ of the permutation $\nu_n$ defined 
for $x \in [1,n]$ by
\[
\nu_n(x) = 
\begin{cases}
    2x, &\text{if } 2x \leq n; \\
    2n+1-2x, &\text{otherwise,}
\end{cases}
\]
while the permutation $\rho_n$ studied by L\'evy is defined 
in~\cite[Chapter~1]{Levy-1950} for $x\in[1,n]$ by
\[
\rho_n(x)=
\begin{cases}
2x-1, &\text{if } 2x \leq n+1; \\
2n+2-2x, &\text{otherwise.}
\end{cases}
\]
Recall that $\sigma_{2,n}$ can also be defined for $x\in[1,2n]$ by
\[
\sigma_{2,n}(x) =
\begin{cases}
2x, &\text{if} \ x \leq n; \\
2x - 1 - 2n, &\text{otherwise.}
\end{cases}
\]
Note that easy identities can be proved, e.g.,
$\forall k \in [2,n]$, we have
$\rho_n(k) = \nu_{n-1}(k-1) +1$, or, as already mentioned
in \cref{sec: first interpretation}, $\nu_n$ is 
the inverse of the Queneau-Daniel permutation $\mu_n$ 
defined on $[1, n]$ by $\mu_n(k) = k/2$ if $k$ is even, 
and $\mu_n(k) = n - (k-1)/2$ if $k$ is odd, 
\cite{Bringer-1969}. 

\bigskip

Thus we see that all these permutations share a common flavor.
It is interesting to read the opinion of L\'evy on mathematical 
problems of this sort (see~\cite[page~151]{Levy-1970}):
\begin{quote}
{\em Je ne m'exag\`ere pas l'importance du probl\`eme, qui se
rattache \`a l’analyse combinatoire et \`a la th\'eorie des 
groupes, dont je vais parler maintenant. Aux yeux d'un 
math\'ematicien plus jeune que je ne le suis, cela ne peut 
\^etre qu'une toute petite chose. Mais il me semble qu'il 
peut y avoir int\'er\^et \`a montrer que, dans l'esprit des 
math\'ematiques classiques, il y a encore de jolis 
probl\`emes \`a traiter. En outre, la mani\`ere dont je fus 
conduit \`a poser celui dont je vais parler est assez 
amusante {\rm [...]}.}
\end{quote}
\end{remark}

\subsection{Musical composition}

One can ask whether the permutations studied here have been
used in musical composition beyond bell-ringing.
The answer is of course positive.
First, let us mention the must-read~\cite{Amiot-2008} and 
its bibliography. The interested reader can also look 
at~\cite{Adler-Allouche-2022} where the authors
are interested in which permutations $\sigma_n$ satisfy an 
extra property.
One can in particular listen to the piece \emph{Science Fictions} for two pianos by Adler (see~\cite{Adler-2021}). 

The curious reader can also be interested in the use of
sestinas (see~\cref{poetry}) in music: we will only cite
the {\it Sestina} of Monteverdi; e.g., listen to
\url{https://www.youtube.com/playlist?list=PL2k8ekJXk4nVHGs3glCLvc9EwD_GUEpQ-}.

\section{Conclusion and open directions}

In this paper, we have proven in~\cref{sec: second interpretation} the equality between the 
quantities~\cref{expression1,expression2} by considering a 
generalized permutation of $[1,kn]$ that can be written as 
$x\mapsto k x \bmod{(kn+1)}$, where $k,n$ are positive 
integers.
In addition to studying various properties of this 
permutation in~\cref{sec: first interpretation,sec: asymptotics}, we have listed some of its many occurrences 
in the literature in~\cref{sec: occurrences}.
Going even beyond, the reader might wonder whether other 
generalizations are possible. 
We point out the map $x\mapsto kx \bmod{(kn+j)}$ where 
$j,k,n$ are integers.
Observe that when $k,j$ are not coprime, then the map is 
not a bijection, e.g., $x\mapsto 2x \bmod{(2n+4)}$ maps,
for $n=2$, $(1, 2, 3, 4, 5, 6)$ to $(2, 4, 6, 8, 0, 2)$.
In the case where $k,j$ are coprime, we think 
that the results of this paper can be adapted, for which 
we leave the details to the reader, especially the precise 
description of the cycles of~\cref{thm: precise description of cycles} by Ellis, Fan, and Shallit.

\subsection*{Acknowledgments}
The first author proposed the equality stated  in~\cref{cor: equality between our two quantities} as one of the subjects of the ``Concours SMF Junior 2022'' (organized by the French Mathematical Society).
He thanks the  participants for their work and suggestions.
The three authors want to warmly thank 
Mich\`ele Audin, R\'emy Bellenger, Pieter Moree, Olivier
Salon, and Jeffrey Shallit for highlighting discussions and providing several useful references.


%
%
%
\bibliographystyle{splncs04}
\bibliography{all-stip-yao-original.bib}

\end{document}